\documentclass[11pt,reqno,a4paper]{article}
\usepackage{geometry}
 \geometry{
 a4paper,
 right=25mm,
 left=25mm,
 top=25mm,
 bottom=30mm,
 }

\usepackage[title]{appendix}
\usepackage{amsmath,amsthm,amstext,amscd,amssymb,euscript,url}
\usepackage{mathrsfs}
\usepackage{calrsfs}
\usepackage{epsfig}
\usepackage[inline]{enumitem}
\usepackage{todonotes}
\usepackage{mathtools}
\usepackage[absolute,overlay]{textpos}
\usepackage{graphicx}
\usepackage{subfig}
\usepackage{tikz}
\usepackage{comment}
\usetikzlibrary{matrix}
\usetikzlibrary{arrows,positioning}

\usepackage{color}
\allowdisplaybreaks

\usepackage{etoolbox}

\renewcommand{\ll}{\lambda_L}
\newcommand{\lr}{\lambda_R}

\renewcommand{\P}{\mathbb{P}}

\newcommand{\Z}{\mathbb Z}
\newcommand{\R}{\mathbb R}
\newcommand{\NN}{{\mathbf N}}

\newcommand{\N}{\mathbb N}

\newcommand{\E}{\mathbb E}

\newcommand{\Zd}{\mathbb Z^d}

\renewcommand{\phi}{\varphi}

\newcommand{\si}{\ensuremath{\sigma}}

\newcommand{\pee}{\ensuremath{\mathbb{P}}}

\newcommand{\Pres}{P^{\rm{res}}}

\newcommand{\Dcl}{D^{\text{cl}}}
\newcommand{\Dclres}{D_{\text{res}}^{\text{cl}}}
\newcommand{\Dor}{D^{\text{or}}}
\newcommand{\Dorres}{D_{\text{res}}^{\text{or}}}

\newcommand{\dd}{\mathrm{d}}
\def\1{{\mathchoice {\rm 1\mskip-4mu l} {\rm 1\mskip-4mu l}
{\rm 1\mskip-4.5mu l} {\rm 1\mskip-5mu l}}}

\newtheorem{theorem}{{\small T}{\scriptsize HEOREM}}[section]
\newtheorem{corollary}{{\bf{\small C}{\scriptsize OROLLARY}}}[section]
\newtheorem{proposition}{{\bf{\small P}{\scriptsize ROPOSITION}}}[section]
\newtheorem{lemma}{{\bf{\small L}{\scriptsize EMMA}}}[section]
\newtheorem{remark}{{\bf{\small R}{\scriptsize EMARK}}}[section]
\newtheorem{definition}{{\bf{\small D}{\scriptsize EFINITION}}}[section]
\newtheorem{induction}{{\bf{\small I}{\scriptsize NDUCTIVE HYPOTHESIS}}}[section]

\renewenvironment{proof}[1]
{\noindent{{\bf{\small{ P}{\scriptsize ROOF}}}.}\hspace{0.1cm} #1} {$\;\qed$\newline}

\newcommand{\beq}{\begin{eqnarray}}
\newcommand{\eeq}{\end{eqnarray}}

\newcommand{\ba}{\begin{align*}}
\newcommand{\ea}{\end{align*}}

\newcommand{\be}{\begin{equation}}
\newcommand{\ee}{\end{equation}}

\newcommand{\bl}{\begin{lemma}}
\newcommand{\el}{\end{lemma}}

\newcommand{\br}{\begin{remark}}
\newcommand{\er}{\end{remark}}

\newcommand{\bt}{\begin{theorem}}
\newcommand{\et}{\end{theorem}}

\newcommand{\bd}{\begin{definition}}
\newcommand{\ed}{\end{definition}}

\newcommand{\bind}{\begin{induction}}
\newcommand{\eind}{\end{induction}}

\newcommand{\bp}{\begin{proposition}}
\newcommand{\ep}{\end{proposition}}

\newcommand{\bc}{\begin{corollary}}
\newcommand{\ec}{\end{corollary}}

\newcommand{\bpr}{\begin{proof}}
\newcommand{\epr}{\end{proof}}

\newcommand{\bi}{\begin{itemize}}
\newcommand{\ei}{\end{itemize}}

\newcommand{\ben}{\begin{enumerate}}
\newcommand{\een}{\end{enumerate}}

\newcommand{\caA}{{\mathcal A}}

\newcommand{\caD}{{\EuScript D}}

\newcommand{\caE}{{\mathrsfs E}}

\newcommand{\caP}{{\mathcal P}}

\newcommand{\caX}{{\mathcal X}}

\newcommand{\nn}{\nonumber}

\newmuskip\pFqmuskip

\newcommand\pFq[6][8]{%
	\begingroup 
	\pFqmuskip=#1mu\relax
	\mathcode`\,=\string"8000
	\begingroup\lccode`\~=`\,
	\lowercase{\endgroup\let~}\pFqcomma
	{}_{#2}F_{#3}{\left[\genfrac..{0pt}{}{#4}{#5};#6\right]}%
	\endgroup
}
\newcommand{\pFqcomma}{\mskip\pFqmuskip}

\newtheorem{asu}{Assumption}
\newcommand{\basu}{\begin{asu}}
\newcommand{\easu}{\end{asu}}

\begin{document}
\title{{\bf  Boundary driven Markov gas: \\ duality and scaling limits}}

\author{G. Carinci$^{\textup{{\tiny(a)}}}$, S. Floreani$^{\textup{{\tiny(b)}}}$, C. Giardin\`{a}$^{\textup{{\tiny(a)}}}$ and F. Redig$^{\textup{{\tiny(b)}}}$\
\\
\\
\small $^{\textup{(a)}}$
\small{Universit\`a di Modena e Reggio Emilia}\\
{\small Via Campi 213/B, 41125, Modena, Italy}
\\\\
\small $^{\textup{(b)}}$
\small{Delft Institute of Applied Mathematics, Delft University of Technology}\\
{\small Van Mourik Broekmanweg 6, 2628 XE Delft, The Netherlands}
}
\maketitle

\begin{abstract}

Inspired by the recent work of Bertini and Posta \cite{bertini},  who introduced the boundary driven Brownian gas on $[0,1]$,  we study boundary driven systems of independent particles in a general setting, including particles jumping on finite graphs and diffusion processes on bounded domains in $\R^d$. We prove duality
with a dual process that is absorbed at the boundaries, thereby creating a general framework that unifies dualities for boundary driven systems in the discrete and continuum setting. 
We use duality first to show that from any initial condition the systems evolve to the  unique invariant 
measure, which is a Poisson point process with intensity the solution of a Dirichlet problem. Second, we show how the boundary driven Brownian gas arises as the diffusive scaling limit of a system of 
independent random walks coupled to reservoirs with properly rescaled intensity. 

 \medskip\noindent
 \emph{Keywords:} boundary driven systems, Brownian gas, duality, orthogonal polynomials, point processes,  scaling limits.
 
 \medskip\noindent
 \emph{MSC2020:} 
 Primary: 
 60J70, 
 60K35. 
 Secondary: 82C22. 

 \medskip\noindent
 \emph{Acknowledgement:} S.F.\ acknowledges financial support from Netherlands Organisation for Scientific Research (NWO) through grant TOP1.17.019.

\end{abstract}
\section{Introduction}
\subsection{Background and motivation}\label{section: background}

Boundary driven systems are important in the study of non-equilibrium steady states
\cite{bertini2015macroscopic,derrida2007non, lebowitz1988microscopic}. In the context of interacting particle systems on finite graphs, boundary driving means that one adds  reservoirs at the boundaries, where particles can enter and leave the system. This is usually modeled via birth and death processes, where the birth and death rates are chosen in a manner adapted to the system. This means that the stationary measure of the reservoirs is a marginal of the stationary measure of the system. 
The simplest setting is a one-dimensional chain, where the action of the reservoirs is modeled by letting particles
enter and leave the system at left and right end. The stationary distribution of
such non-equilibrium systems and its macroscopic properties (e.g. the density profile, the current and their large deviations) are then 
the usual objects of study.

\smallskip

In the {\em discrete setting} of finite graphs, boundary driven systems of independent particles (and more generally zero-range processes) have a special status, because the non-equilibrium steady states are inhomogeneous product measures, in the case of independent particles product of Poisson distributions. For one dimensional systems, the parameters of these product measures then interpolate linearly between the  densities $\ll$ and $ \lr$ of the left reservoir and right reservoir.
For a class of  particle systems (including independent particles), one has the property of {\em duality} \cite{dmp}, which allows to express the $n$-point time-dependent correlation functions in terms of the evolution of $n$ (dual) particles. In the discrete setting, these dual particles evolve on a larger system, where absorbing
extra sites have been added, representing the action of reservoirs of the original system.
Duality  has been an essential tool to study detailed properties of different boundary driven systems such as the so-called KMP model (see \cite{KMP82}), the Exclusion process and the Inclusion process (see \cite{FGHNR}, \cite{FRSor}, \cite{franceschini2021symmetric}, \cite{frassek2021exact}).
See \cite{cggr} for an account of dualities in the discrete boundary driven setting. Given the broad applicability of duality there is the need to extend it to continuum systems.

\smallskip

In \cite{floreani} the authors started the study of self-duality beyond the discrete setting, i.e., 
self-duality of general independent Markov processes evolving as point configurations, which is the  analogue of particle configurations in the discrete setting.
There, self-duality turned out to be a  general property of the evolution of the $n$-th factorial moment measure, which can be expressed via the evolution
of $n$ (dual) particles.
The authors of \cite{floreani}  considered the setting of closed systems with a conserved number of particles.
The goal of our paper will be to initiate  the analysis of duality for {\em boundary-driven systems in the continuum}, starting from the case of independent particles.
We believe that the framework we build here can be used as well for boundary driven interacting particle systems in the continuum, but we leave this for future research. 
\smallskip

To achieve our goal, a proper definition of the action of reservoirs in the continuum has to be considered.
In the interval $[0,1]$, the naive idea  would be to study a system of
independent Brownian motions  that are absorbed at the boundaries $0$ and $1$, 
with additional creation of particles at $0$ and $1$.  
However, as was noticed in \cite{bertini}, this approach does not work, because in the continuum particles put at the boundary 
would immediately leave via that same boundary. 
Therefore, the problem of modeling reservoirs in the continuum is more involved than in the discrete setting. In \cite{bertini} the {\em boundary-driven Brownian gas} on $[0,1]$ has been defined as the sum of two independent processes: 
one process modeling the evolution of the particles initially present in the system
and moving as independent Brownian motions absorbed at $0$ and at $1$;
and another Poisson point process adding particles on $(0,1)$ with well-chosen intensity.
The creation of particles no longer takes place at the boundaries only,
rather particles are created everywhere in $(0,1)$ with an intensity
that guarantees the prescribed densities of the reservoirs. 
The authors in \cite{bertini} then proceed by proving that this process is  Markov.

\smallskip

One of the main aims in this paper is to establish in the setting of the boundary driven Brownian gas, 
the kind of duality results proved in  \cite{cggr,FRSor} for discrete boundary driven systems.
To do this, we use the set-up introduced in \cite{floreani} for closed systems in the continuum
and extend it to the boundary driven Brownian gas. 
In particular we show that the time-dependent $n$-th factorial moment measures of this system
can be written in terms of $n$ dual Brownians, absorbed at the boundaries.
Next, a second aim is to generalize this duality to the abstract setting of
general boundary driven systems of independent particles in the continuum.
For this we will need to generalize the construction of Bertini and Posta \cite{bertini}
first to systems of independent diffusion processes  evolving on regular domain $\mathfrak D\subset \R^d$ and second to systems of  general independent Markov processes
which are allowed to jump  and which thus can leave $\mathfrak D$ without hitting its boundary. As a by-product of such general construction and our duality relations two results will follow.
We shall prove that in the discrete setting of a one-dimensional chain, 
modelling the reservoirs as: i) birth and death processes
at  the boundaries or ii) by a Poissonian addition of particles
everywhere, are indeed equivalent processes.
Furthermore the boundary driven Brownian gas (in the continuum)
arises as the diffusive scaling limit of the model with birth and death processes
(in the discrete) when the intensities are also scaled with the system size.

\subsection{Duality results for Independent Random Walks}
\label{section: recap duality}
For readers convenience we recall the standard dualities of independent particles in the discrete setting, both in the case of closed and open systems.
\vskip.4cm
\paragraph{Closed systems.}
Let us consider a system of simple independent random walks, namely the Markov process $\{\eta_t, 
\; t\ge 0\}$ with $\eta_t=\{\eta_t(x)\}_{x\in \Z^d}\in\N^{\Z^d} $ where 
\begin{equation*}
\eta_t(x)\coloneqq	 \text{number of particles at}\ x \text{ at time }\ t\ge 0
\end{equation*} 
 whose generator acts on bounded and local functions $f:\N^{\Z^d}\to \R$ as 
\begin{equation}\label{eq: generator IRW}
(Lf)(\eta)=\sum_{\|x-y\|=1} \frac{1}{2}\left[\eta(x)(f(\eta+\delta_x-\delta_y)-f(\eta))+ \eta(y)(f(\eta+\delta_y-\delta_x)-f(\eta))\right].
\end{equation}
Here the sum is restricted to nearest neighbour sites and  $\eta+\delta_x-\delta_y$ denotes the configuration where a particle has been moved from $x$ to $y$ in the configuration $\eta$.
We then have that $\{\eta_t, \, t\ge 0\}$ is self-dual with self-duality function given by 
\begin{equation}\label{eq: classical dualities}
D^{\text{cl}}(\xi,\eta)=\prod_{x\in \Z^d}d(\xi(x), \eta(x))
\end{equation}
for $\xi, \, \eta\in \N^{\Z^d}$ with single-site self-duality function given by 
\begin{equation}\label{eq: classical dualities0}
d(k,n)=(n)_k \boldsymbol 1_{\{k\le n\}}:=\frac{n!}{(n-k)!} \boldsymbol 1_{\{k\le n\}}.
\end{equation}
 If we denote by $\E^{\text{IRW}}_\eta$ the expectation w.r.t. the law of the process evolving according to the generator given in \eqref{eq: generator IRW} and starting from $\eta\in \N^{\Z^d}$, the self-duality relation is then expressed in the following way: for any $\eta, \, \xi\in \N^{\Z^d}$ and for any $t\ge 0$,
 \begin{equation}\label{eq: classic sel dual relation}
 \E^{\text{IRW}}_\xi[ D^{\text{cl}}(\xi_t,\eta)]=\E^{\text{IRW}}_\eta[ D^{\text{cl}}(\xi,\eta_t)].
 \end{equation}
The self-duality functions given in \eqref{eq: classical dualities}, that we refer to as \textit{classical self-dualities} are   products of falling factorial polynomials and they have been  used to prove the hydrodynamic limit (see  \cite{dmp}). 
More recently they have been generalized to the context of systems of independent particles evolving in the continuum (e.g., $\R^d$, and more generally in Borel spaces) in \cite{floreani}.
\vskip.4cm
\paragraph{Open systems.} 
Let us further consider a system of simple independent random walks on a finite chain $ V_N:=\{ 1,\ldots, N\}$ where the boundary points $\{1,\, N\}$ are in contact with reservoirs with intensity parameters $\ll,\, \lr\in(0,\infty)$. Namely, we consider the Markov process $\{\zeta_t, \, t\ge 0\}$ with state space $\N^{V_N}$ and whose generator acts on  functions $f:\N^{V_N}\to \R$ as 
\begin{align}\label{eq: IRW with Res CGGR}
L_{\text{res}} f(\zeta)=L_{\text{bulk}}f (\zeta)+L_{\text{left}}f(\zeta)+ L_{\text{right}}f(\zeta)
\end{align}
where
$L_{\text{bulk}}$ denotes the generator of  continuous-time symmetric independent  random walkers jumping with rate $\frac{1}{2}$ over the edges $(i,i+1)$, $i\in \{1, \ldots, N-1\}$ 
and where $L_{\text{left}}, L_{\text{right}}$ denote the boundary generators, modelling the contact with the reservoirs, which are given by
\[
L_{\text{left}} f(\zeta)= \zeta(1) [f(\zeta-\delta_1) -f(\zeta)]+ \ll [f(\zeta+\delta_1)-f(\zeta)]
\]
and
\[
L_{\text{right}} f(\zeta)= \zeta(N) [f(\zeta-\delta_N) -f(\zeta)] + \lr [f(\zeta+\delta_N)-f(\zeta)].
\]
These generators describe the exit and entrance of particles via the reservoirs at left and right boundaries of the chain.
Each particle can leave the system through the right or left end at rate 1, and at rate $\ll$ (resp.
$\lr$) particles enter the system at the left (resp. right) end.
In the following we shall  call the process $\zeta_t$ the \textit{``reservoir process with parameters} $\ll, \lr$''.

In \cite{cggr} the authors proved that the reservoir process with parameters $\ll, \lr$ is dual to a system of independent random walkers
on the lattice $\{0, \ldots, N+1\}$ with absorbing boundaries.  In the dual process the absorbing sites $0$ and  $N+1$ replace the reservoirs of the original process.
With abuse of notation we shall use, for the dual process, the name 
 $\{\xi_t, \; t\ge 0\}$ as in the previous paragraph, although now, in the boundary-driven context, the dual has absorbing boundary sites.
The duality function $D^{\ll, \lr}$ can be written as
\be\label{reservoirdualfunction}
D^{\ll, \lr} (\xi, \zeta)= \ll^{\xi(0)} \lr^{\xi(N+1)} D^{\text{cl}}(\xi,\zeta)  
\ee
where $\xi\in  \N^{\{0, \ldots, N+1\}}$, $\zeta\in \N^{\{1, \ldots, N\}}$ and $D^{\text{cl}}(\cdot,\cdot)$ is given in \eqref{eq: classical dualities} but now the product is over $V_N$ and not over $\Z^d$, namely $$D^{\text{cl}}(\xi,\zeta)  =\prod_{i=1}^N d(\xi(i),\zeta(i))$$ with $d(k,n)= \frac{n!}{(n-k)!}\, \mathbf 1_{k\leq n}$.
Let us denote by $\E_\zeta^{\rm{res}}$ the expectation in the reservoir process with parameters $\ll, \lr$ starting from $\zeta \in \N^{\{1,\ldots, N\}}$. Moreover we denote by $\E_\xi^{\rm{abs}}$ the expectation in the dual process starting from an initial configuration $\xi \in \N^{\{0, \ldots, N+1\}}$.
 Then we have the following duality result: for any $\zeta\in\N^{V_N}$, $\xi\in  \N^{\{0, \ldots, N+1\}}$ and $t\ge 0$

\be\label{reservoirdual}
\E^{\rm{res}}_\zeta\left[ D^{\ll, \lr}(\xi, \zeta_t)\right] = \E^{\rm{abs}}_\xi\left[ D^{\ll, \lr}(\xi_t, \zeta)   \right] 
\ee
or equivalently 
\be\label{reservoirdual2}\E^{\rm{res}}_\zeta\left[ \ll^{\xi(0)}\lr^{\xi(N+1)}D^{\text{cl}}(\xi, \zeta_t)\right] = \E^{\rm{abs}}_\xi\left[ \ll^{\xi_t(0)}\lr^{\xi_t(N+1)}D^{\text{cl}}(\xi_t, \zeta)\right]  .
\ee
The main aim of this paper is to extend the above duality result to  general systems of boundary-driven independent particles. The random walk dynamics of each particle  will be replaced by a generic Markov process. As a consequence we shall consider  boundary driven systems of independent particles evolving not necessarily on the lattice, rather on generic regular domains $\mathfrak D\subset \R^d$, $d\ge 1$.

\subsection{Outline}\label{section: outline}
The rest of our paper is organized as follows. In Section 2 we introduce basic notations. As a preliminary step, in Section 3 we present duality results for closed systems of independent particles
in the continuum. First we recall a self-duality result from \cite{floreani}. Second, we prove a duality result, where the dual system is deterministic and follows the backward Kolmorogov equation associated to the single particle; we then use this duality result to provide a simple proof of the Doob's theorem.  Section 4 contains the main result of this paper regarding {\em boundary driven systems}.  We start by recalling the definition of the boundary driven Brownian gas on $[0,1]$, introduced in \cite{bertini}.  We then generalize this construction  to  general independent diffusion processes moving on regular domains $\mathfrak D\subset \R^d$ and   finally to general independent Markov processes which can make jumps and thus can exit $\mathfrak D$ without hitting its boundary. For those systems 
 we formulate, with increasing generality,  the duality  results  in Theorems \ref{theorem: duality BDBG}, \ref{theorem: self-dual bound driv diff} and \ref{theorem: inter a dual general gas},
 and in particular we use Theorem \ref{theorem: self-dual bound driv diff} to characterize the unique invariant 
measure of the systems.
In Section 5, we use the duality result to show that the boundary-driven Brownian gas introduced in \cite{bertini} is the scaling limit  of the reservoir process of independent random walks  with generator \eqref{eq: IRW with Res CGGR}. 
Namely, we prove that the latter equals in distribution  the 
`boundary driven random walk gas'
and that, when the parameters are scaled as $\ll/N$, $\lr/N$, it converges on the diffusive scale to the boundary driven Brownian gas with parameters $\ll$, $\lr$. 
Finally, in Section 6, orthogonal dualities are treated, extending to the continuum results from \cite{FRSor}.

\section{Setting and Notations}

We will work in the context of independent particles moving in a state space $E$, which is assumed
to be a Polish space, equipped with its Borel $\si$-algebra $\caE$.
In the relevant examples, $E=\R^d$, or $E$ is a closed subset of $\R^d$ with regular boundary, or
in the discrete setting $E=\Zd$ or a finite graph. However, for the general duality results which we state here, there
is no need to restrict to the finite dimensional setting.

\subsection{Labeled independent particles}
A single particle is moving as a Markov process
$\{X_t:t\geq 0\}$ on $E$. A finite number of (labeled) independent particles
is the process $\mathcal X_t=(X_t(1), \ldots, X_t(\NN))$ arising from joining $\NN$ independent copies of
$\{X_t:t\geq 0\}$, possibly starting
from different initial locations $X_0(i)=x_i\in E$.
We denote by $\E_{x_1,\ldots,x_{\NN}}$ the expectation of $\{\mathcal X_t, t\ge 0\}$ starting from $(x_1,\ldots,x_{\NN})$, by $S_t$ the semigroup of the Markov process $\{X_t:t\geq 0\}$, defined via $S_tf(x)= \E_x f(X_t)$, and
by $S_t^{\otimes \NN}$ the associated semigroup of $\NN$ independent copies of $\{X_t:t\geq 0\}$. By independence we have
\[
S^{\otimes \NN}_t\prod_{i=1}^\NN f_i(x_i)= \prod_{i=1}^\NN \E_{x_i} \left[ f_i(X_t(i)) \right]= \prod_{i=1}^\NN S_t f_i (x_i).
\]
We denote by $S^*_t$ the dual semigroup working on measures $\mu$ (on $(E, \caE)$), defined via
\be\label{measuresem}
\int f d S^*_t \mu=\int S_tf d\mu.
\ee
We remind the reader that we call a $\sigma$-finite measure $m$ on $E$ \textit{reversible} if 
$$\int_E S_tf \, g\, \dd m=\int_E f\, S_tg \, \dd m$$
for any $f,g\in L^2(E,m)$ and $t>0$.
Moreover, we say that the Markov process $\{X_t, \, t\ge 0\}$ is  {\em strongly reversible} if there exists a reversible $\si$-finite measure
$m$ such that the transition probability measure is absolutely continuous w.r.t.\ $m$, i.e.,
there exists a  transition density $$\mathfrak p_t: E\times E\to [0,\infty)$$ such that, for all $t>0$,
\be\label{transdens}
S_t f(x)= \int f(y) \mathfrak p_t(x,y) m(dy) = \int f(y) \mathfrak p_t(y,x) m(dy)
\ee
where the symmetry $\mathfrak p_t(x,y)= \mathfrak p_t(y,x)$ follows from the assumed reversibility of $m$.
Relevant examples to keep in mind are i) Brownian motion, where $m$ is the Lebesgue measure; ii) symmetric random walk, where
$m$ is the counting measure; iii) the Ornstein Uhlenbeck process, where $m$ is the Gaussian measure.

\subsection{Point configurations}
It is  convenient for our purposes to describe the motion of independent particles modulo permutation, i.e. via configurations. More precisely, the initial configuration associated to  $\NN$ labeled particle positions $(x_1, \ldots, x_\NN)\in E^\NN$ is defined as
\begin{equation}\label{etanot}
\eta = \sum_{i=1}^{\NN} \delta_{x_i},
\end{equation}
which is viewed as a point configuration on $E$. The configuration at time $t$ is then defined as
\be\label{etatnot}
\eta_t = \sum_{i=1}^\NN \delta_{X_t(i)}
\ee
where $X_0(i)=x_i$. Notice that by the fact that the independent particles are indistinguishable, $\{\eta_t, t\geq 0\}$
is a Markov process on the space of point configurations with total mass $\NN$.
More generally, if we have a point configuration on $E$, with potentially infinitely many particles, i.e.,
$
\eta =\sum_{i=1}^\NN\delta_{x_i}
$
where we now also allow $\NN=\infty$, then we define the configuration at time $t>0$ as in \eqref{etatnot}.
In case we work with infinitely many particles, we have to assume that the initial configuration is such that
no explosions take place, i.e., such that at any time $t>0$, the configuration
$\eta_t= \sum_{i=1}^\NN\delta_{X_t(i)}$ is a well-defined point configuration. In this paper, however, in order to avoid technicalities, we will  restrict to systems with finitely
many particles.
We denote by $\E_\eta$ the expectation in  the configuration process $\{\eta_t, t\geq 0\}$.

For a configuration $\eta$ we define its associated $n$-\textit{th factorial  measure} by
\begin{align}\label{factmom}
\eta^{(n)}:= \sideset{}{^{\neq}}\sum_{1 \leq i_1, \ldots, i_n \leq \NN} \delta_{(x_{i_1}, \ldots, x_{i_n})}
\end{align}
where the superscript $\ne$  means that the summation is over $n$ mutually distinct indexes  $i_1, \ldots, i_n$ taken from $\{1,\ldots, \NN\}$, with $\NN=\eta(E)$. The measure $\eta^{(n)}$  is a point-measure on $E^n$. Intuitively speaking, $\eta^{(n)}$ corresponds to un-normalized sampling of $n$ different particles out of the configuration $\eta$ and takes the name \textit{factorial} from the following identity: for any $B\in\mathcal E$
$$\eta^{(n)}(B^n)=(\eta(B))_n$$
with $(m)_n:=m(m-1)\cdots(m-n+1)$ denoting the $n$-th falling factorial.

An important object of study is the expectation $\E[\eta^{(n)}]$  that is called the $n$-\textit{th factorial moment measure}.  
Here  $\E$ refers to the average w.r.t.\ the randomness of the distribution of points in $\eta$. We have that $\E[\eta^{(n)}]$  is a 
measure on $E^n$, and, in particular,
$$\E[\eta^{(n)}(B^n)]=\E[(\eta(B))_n]$$
provides the $n$-th factorial moment of the number of points of $B\in \mathcal E$.
An important special case is when the points in $\eta$ are distributed according to a Poisson point process
with intensity measure $\lambda$: it is well know (see, e.g.,  \cite[(4.11)]{last} ) that in this case one has
\be\label{factmom}
\E[\eta^{(n)}]= \lambda^{\otimes n}
\ee
which  is a particular instance of the Mecke's equation. 

In the next sections we will study, by duality,  the expectation of the $n$-th factorial measure of the configuration at time $t$, i.e. $\E_\eta[\eta_t^{(n)}]$, which will be called \textit{the $n$-th factorial moment measure at time} $t$.

\section{General duality results for independent particles}
In this section we review some known duality results for closed (i.e., without reservoirs) systems of independent particles: namely self-duality and duality w.r.t. deterministic systems.

\subsection{Intertwining  and self-duality}\label{section: self inter and self dual without res}
We now recall an intertwining and a self-duality result  for independent particles taken from \cite{floreani}. As already mentioned, in order
to avoid technicalities,  the  results below are stated for finitely many particles. However, whenever the infinitely many particle limit is well-defined, by passing to this limit, the result extends immediately to the infinite case.

The following result (originally stated in  \cite[ Theorem 3.1 and 3.2]{floreani}) states that the expectation of the $n$-th factorial moment measure at time $t$ can be expressed in terms
of $n$ independent evolutions. 

\bt\label{floreanithm}
Let $\eta$ be a finite point configuration as defined in \eqref{etanot}. Assume that the particles evolve independently
according to the Markov process $\{X_t:t\geq 0\}$.
\begin{itemize}
\item[a)] (Intertwining)  The following identity holds
\be\label{duality}
\E_\eta[\eta^{(n)}_t] = (S_t^{\otimes n})^* \eta^{(n)},
\ee
where $S_t^*$ is the dual semigroup defined in \eqref{measuresem}.
\item[b)] (Self-duality) If $\{X_t:t\geq 0\}$ is strongly reversible with reversible measure $m$ then
one can express the density of $n$-th factorial moment measure $\E_\eta[\eta^{(n)}_t] $
w.r.t.\ $m^{\otimes n}$ via
\be\label{strongrevfactmom}
\frac{\dd\E_\eta[\eta^{(n)}_t] }{\dd m^{\otimes n}} (z_1,\ldots, z_n)
=
\int \prod_{i=1}^n \mathfrak p_t(z_i, y_i) \eta^{(n)}(\dd(y_1, \ldots, y_n)),
\ee
where $\mathfrak p_t(\cdot, \cdot)$ is the transition density defined in \eqref{transdens}.
\end{itemize}
\et
We provide here a  proof of the above result, which relies on generating functions. This generating function approach is well suited to study boundary driven systems in Section \ref{section: boundr driv syst} below.

\bpr
We start from the following identity from \cite[Lemma 4.11]{last},  for a general finite random point configuration. Let $u: E\to (0,1)$ then
\be\label{last}
\exp\left(\int \log(1-u(z))\eta(\dd z)\right)= 1+\sum_{n=1}^\infty\frac{(-1)^n}{n!}\int u^{\otimes n}(z_1,\ldots, z_n)
\eta^{(n)} (\dd(z_1, \ldots, z_n)).
\ee
We can now use this identity to prove \eqref{duality}. Let us adopt the abbreviation  $u_t(z)= S_tu(z)=\E_z[u(X_t)]$. Using the independence of the processes $X_t(i)$, $i\in\{1,\ldots, N\}$, we compute
\begin{align}\label{bonk}
&\nonumber \E_\eta\left[\exp\left(\int \log(1-u(z))\eta_t(\dd z)\right)\right]
=\E_{x_1,\ldots,x_{\NN}}\left[\prod_i (1-u(X_t(i)))\right]\nonumber\\
 &= \prod_i \E_{x_i}\left[1-u(X_t(i))\right]=\prod_i (1-u_t(x_i))
 \nonumber=
 \exp\left(\int \log(1-u_t(z))\eta(\dd z)\right)
 \nonumber\\
 &=
1+\sum_{n=1}^\infty\frac{(-1)^n}{n!}\int u_t^{\otimes n}(z_1,\ldots, z_n) \eta^{(n)} (\dd(z_1, \ldots, z_n) )
\nonumber\\
&=
1+\sum_{n=1}^\infty\frac{(-1)^n}{n!}\int u^{\otimes n}(z_1,\ldots, z_n) (S^{\otimes n}_t)^*\eta^{(n)} (\dd(z_1, \ldots, z_n)),
\end{align}
where we used \eqref{last} in the fourth identity.
On the other hand, using \eqref{last} once more, we have
\be\label{bang}
\E_\eta\left[\exp\left(\int \log(1-u(z))\eta_t(\dd z)\right) \right]= 1+\sum_{n=1}^\infty\frac{(-1)^n}{n!}\int u^{\otimes n}(z_1,\ldots, z_n)
\E_\eta[\eta^{(n)}_t] (\dd(z_1,\ldots,z_n))
\ee
and therefore, from \eqref{bang} and \eqref{bonk} we conclude
\beq\label{series}
&&1+\sum_{n=1}^\infty\frac{(-1)^n}{n!}\int u^{\otimes n}(z_1,\ldots, z_n)
\E_\eta[\eta^{(n)}_t] (\dd(z_1,\ldots,z_n))
\nonumber\\
&=&
1+\sum_{n=1}^\infty\frac{(-1)^n}{n!}\int u^{\otimes n}(z_1,\ldots, z_n) (S^{\otimes n}_t)^*\eta^{(n)} (\dd(z_1, \ldots, z_n)).
\eeq
Because this holds for all $u$, identifying term by term in the above series and using a standard density argument for symmetric functions (linear combinations
of functions of the form $u(z_1)u(z_2)\ldots u(z_n)$ are dense in the set of symmetric functions), we obtain \eqref{duality}.

If in addition we assume strong reversibility, we then have, for any $f:E^n\to \R$ {bounded},
\begin{align*}
&\int f(z_1,\ldots,z_n) \E_{\eta}[\eta^{(n)}_t](\dd(z_1,\ldots,z_n))=\int f(z_1,\ldots,z_n) (S^{\otimes n}_t)^* \eta^{(n)}(\dd(z_1,\ldots,z_n))\\
&=\int (S^{\otimes n}_t f)(z_1,\ldots,z_n)  \eta^{(n)}(\dd(z_1,\ldots,z_n))\\
&=\int \left(\int f(y_1, \ldots, y_n)\prod_{i=1}^n\mathfrak p_t( z_i,y_i)m^{\otimes ^n}(\dd (y_1,\ldots,y_n))\right) \eta^{(n)}(\dd (z_1,\ldots,z_n))\\
&=\int \left(\int f(z_1, \ldots, z_n)  \prod_{i=1}^n\mathfrak p_t( z_i,y_i)  \eta^{(n)}(\dd (y_1,\ldots,y_n))\right)   m^{\otimes ^n}(\dd (z_1,\ldots,z_n)),
\end{align*}
from which \eqref{strongrevfactmom} follows.
\epr

\br 
{\em
As noticed in \cite[Remark 2.3(iii)]{floreani}, for the system of independent random walks on $\Z^d$ with generator given in \eqref{eq: generator IRW}, \eqref{strongrevfactmom} is equivalent to the classic self-duality relation \eqref{eq: classic sel dual relation}. Indeed for a singleton $(z_1,\ldots,z_n)$, $z_i\in\Z^d$, we have the relation (see \cite[Lemma 2.1]{floreani})
\be \label{eq: relation fact meas clas dual}
\eta^{(n)}(\{(z_1, \ldots, z_n)\})= D^{\text{cl}}(\delta_{z_1}+\ldots + \delta_{z_n}, \eta)
\ee
with $D^{\rm{cl}}$ defined in \eqref{eq: classical dualities} and thus $$
\E_\eta[\eta^{(n)}_t](\{(z_1,\ldots,z_n)\})= \E^{\rm{IRW}}_\eta[ D^{\text{cl}}(\delta_{z_1}+ \ldots +\delta_{z_n}, \eta_t)]$$ and $$\int \prod_{i=1}^n \mathfrak p_t(z_i, y_i) \;\eta^{(n)}(\dd(y_1, \ldots, y_n))\nn = \E^{\rm{IRW}}_{\delta_{z_1}+\ldots+\delta_{z_n}}[ D^{\rm{cl}}(\xi_t, \eta)],$$
where $\xi_t$ denotes the configuration of independent random walks at time $t$ starting from $\xi_0=\delta_{z_1}+\ldots+\delta_{z_n}$.
}
\er

\subsection{Duality w.r.t. the associated deterministic system}\label{section: duality with deterministic system} 
The so-called ``associated deterministic system'' is a dynamical system on functions $f:E\to\R$ which follows the flow of the Kolmogorov backwards equation of the Markov process $\{X_t, \, t\ge 0\}$.
More precisely for $f:E\to\R$ we define $f_t(x)=S_t f(x)=\E_x[f(X_t)]$. This flow $f_t$ is the solution of the system of ODE given by
\be\label{detpr}
\frac{df_t(x)}{dt} =\mathcal L f_t(x),
\ee
with $\mathcal L$ being the Markov generator associated to $\{S_t, t\ge 0\}$. Notice that, by the Markov semigroup property, $f_t>0$ when $f>0$. For $f:E\to(0,\infty)$ and a labeled process $\{  \mathcal X_t, t\ge 0  \}=\{  (X_t(1),\ldots,X_t(\NN)) ,t\ge0\}$ initialised from $\mathcal X = (x_1,\ldots,x_{\NN})$, we define the function 

\be\label{det}
\mathcal D(f,\mathcal X_t)=\prod_{i=1}^{\NN} f(X_t(i))
\ee
or, alternatively, in terms of the point configuration process $\{\eta_t, t\ge0\}$ we set 
$$D(f,\eta_t):=e^{\int \log f \dd\eta_t}.$$
Duality between the configuration process and the deterministic system is  then formulated as follows.
\bt
{
The process $\{\caX_t: \, t\geq 0\}$ is dual to the deterministic evolution on functions $f: E\to(0,\infty)$ defined via
$f_t(x)= S_t f(x)=\E_x [f(X_t)]$, with duality function
$\caD(f, \caX)= \prod_{i=1}^{\NN} f(X(i))$}, i.e., 
\be\label{dtxdualgen}
\E_{\caX} \left[\caD(f, \caX_t)\right]= \caD(f_t, \caX),
\ee
or, equivalently, in terms of the point configuration process 
\be\label{colanko}
\E_{\eta}\left[ D(f,\eta_t)\right]=D(f_t,\eta).
\ee
\et
\begin{proof}
	The proof is straightforward, indeed by the independence of the particles and by the definition of $f_t$, we have
	$$\E_{\caX}\left[\prod_{i=1}^{\NN} f(X_t(i))\right]= \prod_{i=1}^{\NN} \E_{\caX} \left[ f(X_t(i))\right]= \prod_{i=1}^{\NN} f_t (x_i).
$$
\end{proof}

 \subsubsection{Doob's theorem}\label{section: Doob theorem}

Let us now consider the connection between the duality result of Section \ref{section: duality with deterministic system} with the time evolution of Poisson point processes.
It is well-known that independent Markovian particle evolutions preserve Poisson processes: 
we refer to this result as Doob's theorem but it can also be viewed as a consequence of the random displacement theorem (see, e.g., \cite{last}).

We briefly recall the definition of a Poisson point process.
For a function $\rho: E\to [0,\infty)$ and a $\sigma$-finite measure $m$ on $(E,\mathcal E)$ the Poisson point process with
intensity measure $\rho(z) m(dz)$ is defined
as the random point configuration $\eta= \sum_{i=1}^{\NN} \delta_{x_i}$, defined on a probability space $(\Omega, \caA, \pee)$ such that
\ben
\item For every $\omega\in \Omega$, the map $\mathcal E \ni A\to \eta(\omega, A)$ is a $\N$-valued measure on the $\si$-algebra $\caE$.
\item For $A_1, \ldots A_n\in  \mathcal E$, $n$ disjoint measurable subsets of $E$, $\{\eta(A_i), i=1,\ldots, n\}$ are independent
Poisson random variables with parameter $m_i=\int_{A_i} \rho(z)m(\dd z)$.
\een
See \cite{last} for background on Poisson point processes.
We denote by $\caP_\rho$ the probability in  the Poisson point process with intensity measure $\rho(z) m(\dd z)$ on the space of point configurations.

We recall the reader that a Poisson point process is uniquely characterized by its Laplace functional, i.e.,
by
\be\label{poislap}
\int \left(e^{\int_E f(z) \eta(\dd z)}\right) \caP_\rho (\text{d}\eta)= e^{\int (e^{f(z)}-1)\rho(z) m(\dd z)}
\ee
for all $f$ for which the integral $\int (e^{f(z)}-1)\rho(z) m(\dd z)$ is finite.

We denote by $\E_{\caP_\rho}$ the expectation of the process  of independent particles moving according to the Markovian dynamics corresponding to the semigroup $S_t$ whose associated point configuration is initially distributed as $\caP_\rho$.
The following result then proves Doob's theorem via the duality \eqref{dtxdualgen}.
\bt\label{doob}
Let $\{  \mathcal X_t, t\ge 0  \}=\{  (X_t(1),\ldots,X_t(\NN)) ,t\ge0\}$ be a system of independent particles initialized  at time zero from a Poisson point configuration
with intensity measure $\rho(z) m(\dd z)$, where $m$ is a reversible measure of the Markov process $\{X_t:\, t\ge 0\}$.
Then the distribution of the $\NN$ particles at time $t\ge 0$, namely
the random point configuration $\sum_i \delta_{X_t(i)}$,  is a Poisson point configuration with
intensity measure $\rho_t(z)m(dz)$, where $$\rho_t(z)= \E_z [\rho (X_t)]=S_t\rho (z).$$

\noindent
More generally, if $m$ is a stationary measure of $\{X_t:\, t\ge 0\}$, the  Poisson point process is mapped to a Poisson point process with intensity measure $$\rho_t=S_t^* \rho,$$ where $S_t^*$ denotes the adjoint semigroup of $S_t$.
\et
\bpr
Using \eqref{colanko} and \eqref{poislap}, we obtain
\beq
\int e^{\int \log f(z) \eta_t(\dd z )}\caP_\rho (\text{d}\eta)&=& \int D(f, \eta_t)\caP_\rho (\text{d}\eta)
\nonumber\\
&=& \int e^{\int \log f_t(z)\eta(\dd z)}\caP_\rho (\text{d}\eta)
\nonumber\\
&=&
e^{ \langle (f_t-1), \rho\rangle_{L^2(m)}}
\nonumber\\
&=& e^{ \langle (f-1), S^*_t \rho\rangle_{L^2(m)}}
\eeq
From this we infer that $\eta_t$ is again a Poisson point process
with intensity $\rho_t(z) m(\dd z)$ where $\rho_t(z)= \E_z[ \rho(X(t))]$ if $S_t$ is self adjoint in $L^2(m)$ and $\rho_t(z)=S^*_t \rho(z)$ in the general case.
\epr
\bc
In the setting of Theorem \ref{doob}, the Poisson point processes with intensity measure $\rho\cdot m(dz)$ parametrized by a constant $\rho>0$ are reversible for $\{  \mathcal X_t, t\ge 0  \}$.
More generally, if $m$ is a stationary measure of $\{X_t:\, t\ge 0\}$, the Poisson point process is stationary if and only if $$S_t^* \rho=\rho.$$
\ec
\bpr
When $\rho$ is constant we have, using \eqref{colanko}  and \eqref{poislap},
\allowdisplaybreaks\begin{align*}
&\E_{\caP_\rho} \left(\E_\eta\left[ e^{  \int \log f\dd \eta_t }\right]e^{\int \log g \dd \eta}\right)=\E_{\caP_\rho} \left(e^{ \int \log S_tf\dd \eta }e^{\int \log g \dd \eta}\right)\\
&=\E_{\caP_\rho} \left(e^{ \int \log\left( (S_tf)g\right)\dd \eta }\right)= e^{\rho\int ((S_t f) g-1)\dd m}
\end{align*}

\noindent
and using the self-adjointness of $S_t$ we obtain
$$ e^{\rho\int ((S_t f) g-1)\dd m}= e^{\rho\int ((S_t g) f-1)\dd m}=\E_{\caP_\rho} \left(\E_\eta\left[  e^{ \int \log g\dd \eta_t }\right]e^{\int \log f \dd \eta}\right) $$
which implies reversibility of $\caP_\rho$.

The second statement follows immediately from Theorem \ref{doob}.
\epr

\section{Duality for boundary driven systems of independent particles}\label{section: boundr driv syst}
In this section we will present a duality result for boundary driven systems of independent particles which generalizes previous results of that type obtained only in the discrete setting \cite{cggr}, \cite{FRSor}. In Section \ref{bertsec}, we recall the  definition of the \textit{boundary driven Brownian gas} recently introduced in \cite{bertini} and we state a duality result in this context (see Theorem \ref{theorem: duality BDBG} below). In Section \ref{section: boundary driven independent particles} we consider more general systems  of independent diffusion processes on regular domains $\mathfrak D\subset \R^d$. We prove  first an intertwining result and secondly a duality result under an extra assumption on the transition probabilities of a single particle. In Section \ref{section: general boundary driven gas} we introduce a further generalization of the construction of Bertini and Posta in \cite{bertini}, namely  boundary driven  Markov processes with jumps, which  can exit the  domain without hitting its boundary. 
\subsection{The boundary driven Brownian gas on $[0,1]$: definition  and duality}\label{bertsec}
Let $E=[0,1]$ and denote by $\{W_t, t\ge 0\}$ a standard Brownian motion absorbed upon hitting $0$ or $1$. Let us denote by $\tau_0, \tau_1$ the hitting
times of $0$, resp.\ $1$, of $\{W_t,  t\geq 0\}$. We denote by $\pee^{\rm{abs}}_x$ and by $S_t$ respectively the distribution of the trajectories of $\{W_t, t\ge 0\}$ starting from $x\in [0,1]$ and the semigroup of the process. It is well known that the transition probability  $p_t(\cdot,\cdot):[0,1]\times \mathcal B([0,1])\to[0,1]$ of the absorbed Brownian motion satisfies 
 \begin{equation}\label{eq: absolutely continuity pt}
p_t(x, \dd y)= \mathfrak p_t(x, y)\, \dd y \qquad \forall x,y\in (0,1)
\end{equation}
with $\mathfrak p_t(x,y)=\mathfrak p_t(y,x)$ a symmetric function referred as transition density (see, e.g., \cite[p. 122]{handbook} for an explicit formula of $\mathfrak p_t(x,y)$). With a slight abuse of notation we denote by $\mathfrak p_t(x,0)$ (respectively $\mathfrak p_t(x,1)$) the  probability,  starting from $x\in [0,1]$, of being absorbed at $0$ (resp. at $1$) by the time $t\ge 0$. We then have, for any $x\in (0,1)$,
 \begin{equation}\int_0^1 \mathfrak p_t(x,y) \, \dd y + \mathfrak p_t(x,0)+ \mathfrak p_t(x,1)=1
\end{equation}
and for any $f:[0,1]\to \R$ {bounded }
$$S_tf(x)=\int_0^1 \mathfrak p_t(x,y)f(y)\, \dd y+ f(0)\mathfrak p_t(x,0)+ f(1)\mathfrak p_t(x,1).$$
\noindent
For $\xi:=\sum_{i=1}^\NN\delta_{x_i}$, $x_i\in (0,1)$ and $\NN\in \N$, we then consider the point configuration (on $[0,1]$) valued  Markov process given by 
$$\begin{cases}\xi_t:=\sum_{i=1}^\NN\delta_{W_{t}(i)}\ ,&\\
\xi_0=\xi\end{cases}$$
where $\{W_t(i)\}_{t\ge 0}$ are independent copies of $\{W_t\}_{t\ge 0}$ such that $W_0(i)=x_i$ for any $i\in[\NN]$. The transition function $P_t(\xi,\cdot)$ of the process $\{\xi_t, t\ge 0\}$ is then given by the image of $\otimes_{i=1}^\NN  p_t(x_i,\cdot)$ under the mapping $(x_i)_{i=1}^\NN\to \sum_{i=1}^\NN\delta_{x_i}.$ For $\boldsymbol x=(x_1,\ldots,x_\NN)\in (0,1)^\NN$, we denote by $\E^{\rm{abs}}_{\boldsymbol x}$ the expectation in
the process $\{\xi_t, t\ge 0\}$ starting from $\xi_0=\sum_{i=0}^N\delta_{x_i}$. 
Finally, let $\Theta_{t}$ be a Poisson point configuration on $(0,1)$ with time dependent intensity $\lambda_t(dx)$ given by 
\begin{equation}\label{intens}
\lambda_t(\dd x)=\lambda(t,x) \, \dd x
\end{equation}
and
\begin{align}
\nonumber \lambda(t,x)&=\ll \P^{\rm{abs}}_x(\tau_0\le t)+\lr\P^{\rm{abs}}_x(\tau_1\le t)\\
&=\ll \mathfrak p_t(x,0)+\lr\mathfrak p_t(x,1)
\end{align}
for some $\lambda=(\ll, \lr)\in\R^2_+$. 
Moreover $\{\xi_t\}_{t\ge 0}$ and $\{\Theta_t\}_{t\ge 0}$ are independent. The process $\{\Theta_t\}_{t\ge 0}$, by adding particles in the bulk $(0,1)$,  models in turn the effect of the reservoirs  at $0$ and $1$ (cf. \cite[(2.1), (2.2)]{bertini}).
The \textit{boundary driven Brownian gas}  is then defined, for any $t> 0$, by 
\begin{equation}\label{ka}\eta_t=\xi_t {\big |}_{(0,1)} +\Theta_t
\end{equation}
viewed as a point configuration on $(0,1)$ and such that $\eta_0=\xi_0 {\big |}_{(0,1)}$. Here $\xi_t {\big |}_{(0,1)}$ denotes the restriction of the point configuration $\xi_t$ to $(0,1)$.

 The motivation for this definition can be found in \cite{bertini}. In Section \ref{section: scaling limit} below we will show how the boundary driven Brownian gas arises as a scaling limit of the reservoir process on a chain $\{1,\ldots, N\}$ defined in \eqref{eq: IRW with Res CGGR}.
\vskip.3cm
\noindent
Let us recall that $\eta^{(n)}$ denotes the $n$-th  factorial  measure
corresponding to the initial configuration $\eta_0=\eta$ made of $\NN$ particles, and $\eta_t^{(n)}$ denotes the $n$-th factorial
 measure corresponding to $\eta_t$, i.e. the configuration at time $t$ with $\NN_t$ particles. We denote by $\E^\lambda_\eta$ the expectation in the process
defined via \eqref{ka} initialized from $\eta$.  We will use the following abbbreviations: $\boldsymbol x=(x_1,\ldots,x_n)$, for $I=(i_1,\ldots,i_k)\subset \{1, \ldots, n\}$ we put $\boldsymbol x_{I}=(x_{i_1},\ldots, x_{i_k})$ and we write $[k]$ for $\{1,\ldots,k\}$. We shall also use the following shorthand for the transition density in the rest of the paper
\be
\mathfrak p^{(n)}_t(\boldsymbol x,\boldsymbol y)= \prod_{i=1}^n \mathfrak p_t(x_i,y_i).
\ee
\vskip.2cm

For the boundary driven Brownian gas the following duality result holds, where the dual process is  a system of independent absorbed Brownian motions.

\begin{theorem}\label{theorem: duality BDBG}
For the boundary driven Brownian gas $\{\eta_t, t\ge 0\}$, the $n$-th factorial moment measure at time $t>0$ is absolutely continuous w.r.t.  $m^{\otimes n}$, with $m$ denoting the Lebesgue measure on $(0,1)$ with the following density:

\be\label{eq: duality bound driv brownian gas}
\frac{\dd\E^{\lambda}_\eta[\eta_t^{(n)} ]}{\dd m^{\otimes n}} (\boldsymbol z)=\sum_{I\subset [n]}   \E^{\rm{abs}}_{\boldsymbol z_{I}}\left[ \ll^{\xi_t(\{0\})}\lr^{\xi_t(\{1\})}\boldsymbol 1_{\{\xi_t(\{0,1\})=|I|\}} \right] \int_{(0,1)^{n-|I|}}\mathfrak p_t^{(n-|I|)}(\boldsymbol z_{[n]\setminus I},\boldsymbol y)     \eta^{(n-|I|)} (\dd\boldsymbol y) .      
\ee
\end{theorem}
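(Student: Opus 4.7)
The starting point is the decomposition \eqref{ka}, which writes $\eta_t=\xi_t|_{(0,1)}+\Theta_t$ as a sum of two \emph{independent} point processes on $(0,1)$: the process $\xi_t$ of absorbed independent Brownian motions (restricted to the bulk) and the Poisson point process $\Theta_t$ with intensity measure $\lambda_t(\dd x)=\lambda(t,x)\,\dd x$. The first step is to record the combinatorial identity for the factorial measure of a sum of two point configurations $\mu+\nu$:
\[
(\mu+\nu)^{(n)}(\dd\boldsymbol z)=\sum_{I\subset[n]}\mu^{(|I|)}(\dd\boldsymbol z_{I})\;\nu^{(n-|I|)}(\dd\boldsymbol z_{[n]\setminus I}),
\]
which is immediate from the definition \eqref{factmom} by splitting each $n$-tuple of distinct indices according to which configuration each of them belongs to. Combining this with the independence of $\xi_t|_{(0,1)}$ and $\Theta_t$, the $n$-th factorial moment measure of $\eta_t$ factors as
\[
\E^{\lambda}_\eta[\eta_t^{(n)}](\dd\boldsymbol z)=\sum_{I\subset[n]}\E[\Theta_t^{(|I|)}](\dd\boldsymbol z_{I})\;\E^{\rm{abs}}_\eta\bigl[(\xi_t|_{(0,1)})^{(n-|I|)}\bigr](\dd\boldsymbol z_{[n]\setminus I}).
\]

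Next, the two factors are computed separately. For the Poisson piece, Mecke's identity \eqref{factmom} gives $\E[\Theta_t^{(k)}]=\lambda_t^{\otimes k}$ with density $\prod_{i\in I}\lambda(t,z_i)$ w.r.t.\ $m^{\otimes |I|}$. For the absorbed Brownian piece, an application of the intertwining relation \eqref{duality} from Theorem \ref{floreanithm} (applied to the semigroup $S_t$ of the Brownian motion absorbed at $\{0,1\}$) together with \eqref{eq: absolutely continuity pt} yields that $\E^{\rm{abs}}_\eta[(\xi_t|_{(0,1)})^{(k)}]$ is absolutely continuous with respect to $m^{\otimes k}$ on $(0,1)^k$, with density $\int\mathfrak p_t^{(k)}(\boldsymbol z,\boldsymbol y)\,\eta^{(k)}(\dd\boldsymbol y)$. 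Here the symmetry $\mathfrak p_t(x,y)=\mathfrak p_t(y,x)$ is used to match the formulation of \eqref{eq: duality bound driv brownian gas}.

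The last step is to rewrite the Poisson density $\prod_{i\in I}\lambda(t,z_i)$ as the dual absorption expectation $\E^{\rm{abs}}_{\boldsymbol z_{I}}[\ll^{\xi_t(\{0\})}\lr^{\xi_t(\{1\})}\boldsymbol 1_{\{\xi_t(\{0,1\})=|I|\}}]$. Expanding the product gives
\[
\prod_{i\in I}(\ll\mathfrak p_t(z_i,0)+\lr\mathfrak p_t(z_i,1))=\sum_{J_0\sqcup J_1=I}\ll^{|J_0|}\lr^{|J_1|}\prod_{i\in J_0}\mathfrak p_t(z_i,0)\prod_{i\in J_1}\mathfrak p_t(z_i,1),
\]
which, by the independence of the $|I|$ absorbed Brownian trajectories and the convention $\mathfrak p_t(x,0)=\P_x^{\rm{abs}}(\tau_0\leq t)$, $\mathfrak p_t(x,1)=\P_x^{\rm{abs}}(\tau_1\leq t)$, is exactly the dual expectation. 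Substituting this identification into the factored expression above yields \eqref{eq: duality bound driv brownian gas}.

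There is no deep obstacle here; the only care needed is the bookkeeping in the combinatorial splitting of the factorial measure of a sum, and the symmetry/restriction issues when passing from the absorbed Brownian semigroup on $[0,1]$ to densities on $(0,1)^k$. Once these are handled, the statement is essentially a combination of Mecke's formula, the independence of the two components in \eqref{ka}, Theorem \ref{floreanithm}(a), and the explicit form of the intensity $\lambda(t,x)$.
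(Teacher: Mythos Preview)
Your proof is correct and complete; the route, however, differs from the paper's. The paper does not prove Theorem~\ref{theorem: duality BDBG} directly: it is obtained as the special case $\mathfrak D=(0,1)$ of Theorem~\ref{theorem: self-dual bound driv diff}, whose proof proceeds via generating functions. Concretely, the paper computes $\E^{\lambda}_\eta\bigl[\exp\bigl(\int\log(1-u)\,\dd\eta_t\bigr)\bigr]$, factors it by independence of $\Theta_t$ and $\xi_t$, expands each factor using the identity \eqref{last} together with \eqref{factmom} and \eqref{duality}, and then matches the coefficients of $u^{\otimes n}$ term by term. This first yields the intertwining relation \eqref{eq: inter mult dim diff proc} (a sum over $k=0,\dots,n$ with binomial weights), and only afterwards is the symmetric density used, with a final symmetrization turning the sum over $k$ into the sum over $I\subset[n]$.

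You bypass the generating-function machinery entirely by invoking the pointwise identity $(\mu+\nu)^{(n)}=\sum_{I\subset[n]}\mu^{(|I|)}(\dd\boldsymbol z_I)\otimes\nu^{(n-|I|)}(\dd\boldsymbol z_{[n]\setminus I})$ for the factorial measure of a superposition, then taking expectations and using independence. This is more direct and immediately produces the sum over subsets $I$, so no separate symmetrization step is needed. The paper's approach, in contrast, is deliberately uniform with its proof of Theorem~\ref{floreanithm} and with the derivation of the intermediate intertwining result (part a) of Theorem~\ref{theorem: self-dual bound driv diff}), which you do not isolate. Both arguments rely on the same three inputs---Mecke's formula for $\Theta_t$, Theorem~\ref{floreanithm}(a) for $\xi_t|_{(0,1)}$, and the identification of $\prod_{i\in I}\lambda(t,z_i)$ with the dual absorption expectation---so the difference is one of packaging rather than substance.
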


This result can be read as a duality relation in the spirit of \eqref{strongrevfactmom}: in order to know the $n$-th order factorial moment
measure at time $t>0$, one has to follow (not more than) $n$ dual particles. However, due to the presence
of reservoirs, we have factors 
$$\E^{\rm{abs}}_{\boldsymbol z_{I}}\left[ \ll^{\xi_t(\{0\})}\lr^{\xi_t(\{1\})} \boldsymbol 1_{\xi_t(\{0,1\})=|I|} \right] $$ which can be considered as corresponding
to $|I|$ ``absorbed'' dual particles. This result has to be compared with the discrete setting, namely \eqref{reservoirdual2}, where an analogous term multiplying the product of falling factorial polynomials appears in the duality function and  the process with reservoirs
is dual to an absorbing  process with two extra sites associated to the reservoirs (see
\cite{cggr}, \cite{FRSor} and \cite{FGHNR}).

In the next subsection we state and prove a more general version of Theorem \ref{theorem: duality BDBG},
which applies to independent diffusions on regular domains $\mathfrak D\subset \R^d$ and includes also
an intertwining result.  \eqref{eq: duality bound driv brownian gas} for the boundary driven Brownian gas on $(0,1)$ will then follow as a particular case of Theorem \ref{theorem: self-dual bound driv diff}.

\subsection{Boundary driven diffusion processes: definition and duality}\label{section: boundary driven independent particles}

Let $\mathfrak D$ be a regular domain of $\R^d$, where by  regular domain we mean an open, simply connected and bounded subset  $\mathfrak D\subset \R^d$ such that its boundary $\partial  \mathfrak D$ is Lipschitz. Let  $\{Y_t, t\ge 0\}$ be the diffusion process on $\R^d$ with  generator
\be
\label{gen-diffus}
\mathcal L=\sum_{i,j=1}^d \frac{\partial}{\partial x_i}\left(  a_{i,j}(x) \frac{\partial}{\partial{x_j}}\right)+ \sum_{i=1}^d
b_i(x)\frac{\partial}{\partial{x_i}}
\ee
with regular coefficient functions $a_{i,j}$, $b_j$  and with $a=(a_{i,j})$ symmetric, non-degenerate and positive definite. We then  denote by $\{X_t, t\ge 0\}$ the Markov process on $\bar{\mathfrak D}=\mathfrak D \cup \partial \mathfrak D$ with semigroup $\{S_t, t\ge 0\}$, evolving as $\{Y_t, t\ge 0\}$ on $\mathfrak D$ and absorbed upon hitting $\partial D$.  More specifically the regularity assumptions on the coefficients are that  $\frac{\partial^2 a_{i,j}}{\partial_{x_i}\partial_{x_\ell}}$ and $\frac{\partial b_i}{\partial_{x_j}  }$ are locally uniformly Hölder continuous on $\mathfrak D\cup \partial \mathfrak D$ (see, e.g., \cite{k1978}). Denote by  $\P^{\rm{abs}}_x$ (resp. $\E^{\rm{abs}}_x$) the distribution (resp. the expectation) of the trajectories of  $\{X_t, t\ge 0\}$ starting from $x\in \mathfrak D$. We then assume that 
\begin{align}
\P^{\rm{abs}}_x\left ( \tau_{\partial \mathfrak D}<\infty\right)=1, \quad \forall x \in \mathfrak D
\end{align}
where $\tau_{\partial \mathfrak D}$ denotes the hitting time of $\partial \mathfrak D$.

For $\xi:=\sum_{i=1}^\NN\delta_{x_i}$, $x_i\in\mathfrak D$, we consider the point configuration (on $\bar{\mathfrak D}$) valued Markov process given by 
$$\begin{cases}\xi_t:=\sum_{i=1}^\NN\delta_{X_{t}(i)}\ ,&\\
\xi_0=\xi\end{cases}$$
where $\{X_t(i)\}_{t\ge 0}$ are independent copies of $\{X_t\}_{t\ge 0}$ such that $X_0(i)=x_i$ for any $i\in[\NN]$. For $\boldsymbol x=(x_1,\ldots,x_\NN)\in \mathfrak D^\NN$, we denote by $\E^{\rm{abs}}_{\boldsymbol x}$ the expectation in
the process $\{\xi_t, t\ge 0\}$ starting from $\xi_0=\sum_{i=0}^N\delta_{x_i}$.

Let now $\lambda:\partial  \mathfrak D\to \R_+$ be a bounded measurable function giving the reservoir intensity at any $x\in\partial  \mathfrak D$.  If $\lambda$ satisfies the just mentioned assumptions it is said to be regular. Finally we define $\Theta_t$ the Poisson point process on $ \mathfrak D$ with time dependent intensity $\lambda_t(\dd x)$ given by 
\begin{equation}\label{eq: lambda t multidim BM}
\lambda_t (\dd x)=\left(\int_{\partial  \mathfrak D} \lambda (y) \, \P_x( \tau_{\partial \mathfrak D}\le t, X_{\tau_{\partial \mathfrak D}} \in\dd y )\right) \mu(\dd x),
\end{equation}
for a finite measure $\mu$ on $\mathfrak D$.
The \textit{boundary driven diffusion gas in the domain} $\mathfrak D$ \textit{with reservoir intensity} $\lambda$ \textit{and a priori measure} $\mu$, denoted by $\{\eta_t, t\ge 0\},$ is then given, for any $t\ge 0$, by 
\be\label{MDG}
\begin{cases}\eta_t=\xi_t{\big |}_{\mathfrak D} +\Theta_t\ ,&\\
	\eta_0=\xi_0=\sum_{i=1}^N \delta_{x_i}, \ x_i\in \mathfrak D \end{cases}
\ee
viewed as a point configuration on $\mathfrak D$, where $\xi_t{\big |}_{\mathfrak D}$ is the restriction of $\xi_t$ to $\mathfrak D$.

We denote by $\E^\lambda_\eta$ the expectation in the process
defined via \eqref{MDG} initialized from $\eta$. 
Following the strategy of \cite{bertini}, it can be shown that $\{\eta_t,\, t\ge 0\}$ is a  Markov process when the transition probability $p_t(x,\dd y)$ of the process $\{X_t, t\ge 0\}$ satisfies 
\be\label{eq: condition for Markov}
p_t(x,\dd y) m(\dd x)=p_t(y,\dd x) m(\dd y) \qquad \text{on} \qquad  \mathfrak D \times \mathfrak D
\ee 
for a finite measure $m$ and for the reservoir  intensity in \eqref{eq: lambda t multidim BM} we choose $\mu=m$. We refer to  Theorem \ref{Markovianity} in the Appendix for further details.

We are now ready to state the main results of this section, namely 
a general intertwining  relation for the factorial moment measure at time $t>0$ of the  boundary driven diffusion processes on a $d$-dimensional regular domain $\mathfrak D$ and a duality result, under an extra symmetry  assumption on the transition probability of $\{X_t, t\ge 0\}$ (see \eqref{eq: cond for duality} below), generalizing Theorem \ref{theorem: duality BDBG}.

\begin{theorem}\label{theorem: self-dual bound driv diff}
	\begin{itemize}
			Let $\{\eta_t, t\ge 0\}$ be the boundary driven diffusion gas  defined in \eqref{MDG}. Then for all $n\in\N$ and $t\ge 0$, it holds:
\item[a)] for all bounded, measurable  and permutation invariant  $f:\mathfrak D^n \to \R$ 
	\begin{equation}\label{eq: inter mult dim diff proc}
	\E^\lambda_\eta\left[\int_{\mathfrak D^n} f(\boldsymbol z)\eta^{(n)}_t (\dd \boldsymbol z)  \right] = \sum_{k=0}^n {n\choose k} \int_{\mathfrak D^n} f(\boldsymbol z) \lambda_t^{\otimes k}(\dd\boldsymbol z_{[k]})
	\otimes (S^{\otimes n-k}_t)^*\eta^{(n-k)} (\dd \boldsymbol z_{[n]\setminus [k]});
	\end{equation}
\item[b)] assume further  that the transition probability of $\{X_t,t\ge 0\}$ satisfies 
\begin{equation}\label{eq: cond for duality}
p_t(x,\dd y)=\mathfrak p_t(x,y) m(\dd y)
\end{equation} for a symmetric function $\mathfrak p_t(x,y)$ and a finite measure $m$ on $\mathfrak D$. Then, choosing $\mu=m$, the following holds
\begin{align}\label{eq: duality mult dim diff}
	&\frac{\dd\E^\lambda_\eta[\eta_t^{(n)} ]}{\dd m^{\otimes n }} (\boldsymbol z)=\sum_{I \subset [n]}  \E^{\rm{abs}}_{\boldsymbol z_{I}}\left[e^{\int_{\partial\mathfrak D} \log (\lambda)\dd\xi_t}\boldsymbol 1_{\{\xi_t(\partial \mathfrak D)=|I|\}} \right]	\int_{\mathfrak D^{n-|I|}}\mathfrak p_t^{(n-|I|)}(\boldsymbol z_{[n]\setminus I},\boldsymbol y)     \eta^{(n-|I|)} (\dd\boldsymbol y)
	\end{align}
	
	\end{itemize}
\end{theorem}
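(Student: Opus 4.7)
The strategy is to extend the generating-function argument used in the proof of Theorem~\ref{floreanithm} to the boundary driven setting, exploiting the defining decomposition $\eta_t = \xi_t|_{\mathfrak D} + \Theta_t$ of \eqref{MDG} into two independent point configurations. Concretely, for $u:\mathfrak D\to(0,1)$ I would compute the generating functional
\[
G_u\;:=\;\E^\lambda_\eta\!\left[\exp\!\left(\int_{\mathfrak D}\log(1-u)\,\dd\eta_t\right)\right]
\]
in two different ways and match power series coefficients via the density argument for symmetric functions already used to conclude \eqref{duality}.

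For part (a), independence factorizes $G_u$ into the contributions of $\xi_t|_{\mathfrak D}$ and $\Theta_t$. The Laplace functional identity for the Poisson point process $\Theta_t$ immediately yields $\exp\!\left(-\int u(z)\,\lambda_t(\dd z)\right)$, whose power series produces the $\lambda_t^{\otimes k}$ terms. For the absorbed particles I extend $u$ by zero to $\bar{\mathfrak D}$, so that $\int_{\mathfrak D}\log(1-u)\,\dd\xi_t|_{\mathfrak D}=\int_{\bar{\mathfrak D}}\log(1-u)\,\dd\xi_t$; then the computation \eqref{bonk} applied verbatim gives $1+\sum_{n\ge 1}\frac{(-1)^n}{n!}\int u^{\otimes n}\,\dd(S_t^{\otimes n})^*\eta^{(n)}$. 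Taking the Cauchy product of the two series and comparing with the direct expansion \eqref{bang} of $G_u$ produces, for every permutation invariant bounded measurable $f$ and every $n\ge 0$,
\[
\int f(\boldsymbol z)\,\E^\lambda_\eta[\eta_t^{(n)}](\dd\boldsymbol z) \;=\; \sum_{k=0}^n \binom{n}{k}\int f(\boldsymbol z)\, \lambda_t^{\otimes k}(\dd\boldsymbol z_{[k]})\otimes (S_t^{\otimes(n-k)})^*\eta^{(n-k)}(\dd\boldsymbol z_{[n]\setminus[k]}),
\]
which is \eqref{eq: inter mult dim diff proc}.

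Part (b) follows from part (a) by computing the Radon--Nikodym density with respect to $m^{\otimes n}$ under the symmetry assumption \eqref{eq: cond for duality}. First, exactly as in Theorem~\ref{floreanithm}(b), the density of $(S_t^{\otimes(n-k)})^*\eta^{(n-k)}$ with respect to $m^{\otimes(n-k)}$ equals $\int \mathfrak p_t^{(n-k)}(\cdot,\boldsymbol y)\,\eta^{(n-k)}(\dd\boldsymbol y)$. Second, choosing $\mu=m$ in \eqref{eq: lambda t multidim BM} gives the pointwise density $\frac{\dd\lambda_t}{\dd m}(z)=\int_{\partial\mathfrak D}\lambda(y)\,\P^{\rm{abs}}_z(\tau_{\partial\mathfrak D}\le t,\,X_{\tau_{\partial\mathfrak D}}\in\dd y)=\E^{\rm{abs}}_z[\lambda(X_t)\mathbf 1_{\{X_t\in\partial\mathfrak D\}}]$, using that a particle sits in $\partial\mathfrak D$ at time $t$ iff it has been absorbed by then. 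Third, since all densities in play are symmetric in their arguments and $\eta^{(n-k)}$ is itself symmetric, the sum $\sum_{k=0}^n\binom{n}{k}$ (with a distinguished ordered splitting of coordinates into the ``absorbed'' group and the ``bulk'' group) reorganizes into $\sum_{I\subset[n]}$ (an unordered choice of the subset $I$ of absorbed indices). Finally, by independence of the dual absorbed particles
\[
\prod_{j\in I}\E^{\rm{abs}}_{z_j}\!\left[\lambda(X_t)\mathbf 1_{\{X_t\in\partial\mathfrak D\}}\right] = \E^{\rm{abs}}_{\boldsymbol z_I}\!\left[e^{\int_{\partial\mathfrak D}\log(\lambda)\,\dd\xi_t}\mathbf 1_{\{\xi_t(\partial\mathfrak D)=|I|\}}\right],
\]
since on $\{\xi_t(\partial\mathfrak D)=|I|\}$ all $|I|$ dual particles are on $\partial\mathfrak D$ and $\int_{\partial\mathfrak D}\log\lambda\,\dd\xi_t=\sum_{j\in I}\log\lambda(X_t(j))$. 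Combining the three identities yields \eqref{eq: duality mult dim diff}.

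The main subtlety I expect is the boundary bookkeeping: one must extend $u$ by zero on $\partial\mathfrak D$ so that absorbed particles contribute trivially to the generating functional of $\xi_t|_{\mathfrak D}$, and recognize that the event $\{\xi_t(\partial\mathfrak D)=|I|\}$ together with the factor $e^{\int_{\partial\mathfrak D}\log\lambda\,\dd\xi_t}$ correctly encodes the dual interpretation ``exactly the $|I|$ dual particles indexed by $I$ have been absorbed, weighted by $\lambda$ at their absorption sites''. Apart from this, the combinatorial step of matching the Cauchy-product binomial factor $\binom{n}{k}$ with the symmetrized sum $\sum_{I\subset[n]}$ is entirely routine.
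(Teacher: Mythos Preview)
Your proposal is correct and follows essentially the same approach as the paper. Both proofs compute the generating functional $\E^\lambda_\eta\!\left[\exp\!\left(\int_{\mathfrak D}\log(1-u)\,\dd\eta_t\right)\right]$, factorize it via the independence of $\xi_t|_{\mathfrak D}$ and $\Theta_t$, expand the Poisson part using \eqref{factmom} (equivalently, the Laplace functional) and the absorbed-particle part via \eqref{bonk}, take the Cauchy product, and identify coefficients; for part~(b) both rewrite $\lambda_t^{\otimes k}$ as $\E^{\rm{abs}}_{\boldsymbol z_{[k]}}\!\left[e^{\int_{\partial\mathfrak D}\log\lambda\,\dd\xi_t}\mathbf 1_{\{\xi_t(\partial\mathfrak D)=k\}}\right]m^{\otimes k}(\dd\boldsymbol z_{[k]})$, use the symmetry of $\mathfrak p_t$ to pass to a density with respect to $m^{\otimes n}$, and symmetrize the $\binom{n}{k}$ sum into $\sum_{I\subset[n]}$.
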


\allowdisplaybreaks\begin{remark}
\begin{itemize}
	\item[i)]	Notice that, if $\lambda(x)\in\{\ll,\lr\}$ for any $x\in \partial \mathfrak D$ and it is regular (as defined above), setting $\partial \mathfrak D_L=\{x\in \partial \mathfrak D: \ \lambda(x)=\ll \}$, we then have  
		\begin{multline}
		\frac{\dd\E^\lambda_\eta[\eta^{(n)} ]}{\dd m^{\otimes n}} (\boldsymbol z)
		=\sum_{I\subset [n]}    \E^{\rm{abs}}_{\boldsymbol z_{I}}\left[ \ll^{\xi_t(\partial \mathfrak D_L)}\lr^{\xi_t(\partial \mathfrak D\setminus\partial \mathfrak D_L)}\boldsymbol 1_{\{\xi_t(\partial E)=|I|\}} \right]
		\\\times\int_{\mathfrak D^{n-|I|}}\mathfrak p_t^{(n-|I|)}(\boldsymbol z_{[n]\setminus I},\boldsymbol y)     \eta^{(n-|I|)} (\dd\boldsymbol y)  ,
		\end{multline}
		which is the multidimensional analogue of \eqref{eq: duality bound driv brownian gas} when there are two possible values for the reservoir intensity.
			\item[ii)] The multidimensional Brownian motion  satisfies \eqref{eq: cond for duality} with $m$ given by the Lebesgue measure (see, e.g. \cite[Theorem 4.4]{Bass}): thus the multidimensional boundary driven Brownian gas satisfies \eqref{eq: duality mult dim diff}.
			\item[iii)] In one dimension,  all diffusion processes satisfy \eqref{eq: cond for duality} (see, e.g. \cite[pag.13]{handbook}). In particular, consider the diffusion process on $\R$ with generator 
			\be
			\label{gen-diff}
			{\cal L}f(y)=\frac { 1} 2 \, \sigma^2(y)\, \frac{\dd^2 f}{\dd y^2}(y) +  b(y)\,\frac{\dd f}{\dd y}(y) 
			\ee
			where the drift $b$ and the diffusivity $\sigma$ are continuous functions and with $\sigma^2(x)\ge \delta>0$ for each $x\in (0,1)$. Then \eqref{eq: cond for duality} holds with $m$ given by 
			\be
			\label{rev-1d}
			m(\dd x)=\frac{1}{\sigma^2(x)} \;  \exp\left(2\int_{x_0}^x \frac{b(y)}{\sigma^2(y)}\, \dd y\right) \dd x\,,
			\ee
			for an arbitrary $x_0\in (0,1)$ (see, e.g. \cite[pag.17]{handbook}). 
			\item[iv)] We refer to \cite{k1978} for conditions on the coefficients $a_{i,j}$ and $b_i$ in \eqref{gen-diffus} ensuring that \eqref{eq: cond for duality} holds.
	\end{itemize}
\end{remark}

\bpr
Coherently with what we have done in  Section \ref{section: self inter and self dual without res},  we provide a proof relying on generating functions which uses the identity  \eqref{last}. Let $u:\mathfrak D\to \R$ bounded and measurable. By the independence of $\Theta_t$ and $\xi_t$ we have
\allowdisplaybreaks\beq
&&\E^{\lambda}_\eta\left[\exp\left(\int_\mathfrak D \log(1-u(z))\eta_t(\dd z)\right)\right]
\\
&=&
\E_{\mathcal P_{\lambda_t}}\left[\exp\left(\int_{\mathfrak D} \log(1-u(z))\Theta_t(\dd z)\right)\right] \E^{\rm{abs}}_{\eta}\left[\exp\left(\int_\mathfrak D \log(1-u(z))\xi_t{\big |}_{\mathfrak D} (\dd z)\right)\right],
\nonumber
\eeq
where $\E_{\mathcal P_{\lambda_t}}$ denotes the expectation in the Poisson point process $\Theta_t$.
Notice in particular that $\E_{\eta}\left[\exp\left(\int_\mathfrak D \log(1-u(z))\xi_t{\big |}_{\mathfrak D} (\dd z)\right)\right]=\E_{\eta}\left[\exp\left(\int_\mathfrak D \log(1-u(z))\xi_t (\dd z)\right)\right]$ and that for $\{\xi_t, t\ge 0\}$ Theorem \ref{floreanithm} applies.

Using \eqref{last} combined with \eqref{factmom} and \eqref{duality}, we obtain
\allowdisplaybreaks\beq\label{berbak}
&&\E^{\lambda}_\eta\left[\exp\left(\int \log(1-u(z))\eta_t(\dd z)\right)\right]
\\
&=&
\left(1+\sum_{n=1}^\infty\frac{(-1)^n}{n!}\int u^{\otimes n} (z_1, \ldots, z_n) \lambda_t^{\otimes n}(\dd(z_1\ldots z_n))\right)
\nonumber\\
&&\times \left(1+\sum_{n=1}^\infty\frac{(-1)^n}{n!}\int u^{\otimes n}(z_1,\ldots, z_n) (S^{\otimes n}_t)^*\eta^{(n)} (\dd(z_1, \ldots, z_n))\right)
\nonumber\\
&=&
1+
\sum_{k,l} \frac{(-1)^{k+l}}{k! \; l!}\int u^{\otimes (k+l)}(z_1, \ldots, z_{k+l})\, \lambda_t^{\otimes k}(\dd(z_1,\ldots, z_k))
\otimes (S^{\otimes l}_t)^*\eta^{(l)} (\dd(z_{k+1}, \ldots, z_{k+l})) \nonumber
\eeq
On the other hand we have
\begin{equation*}
\E^{\lambda}_\eta\left[\exp\left(\int \log(1-u(z))\eta_t(\dd z)\right) \right]= 1+\sum_{n=1}^\infty\frac{(-1)^n}{n!}\int u^{\otimes n}(z_1,\ldots, z_n)
\E^{\lambda}_\eta[\eta^{(n)}_t](\dd(z_1, \ldots, z_n)).
\end{equation*}
Then, via identification of the terms with $n$-fold tensor product of $u$ in the last expression in \eqref{berbak}
and the right hand side of the above identity, we obtain the following equality for all $n\in\N$:
\beq
&&\int u^{\otimes n} (z_1, \ldots, z_n) \E^{\lambda}_\eta[\eta^{(n)}_t] (\dd(z_1, \ldots, z_n ))
\nonumber\\
&=&
\sum_{k=0}^n {n\choose k}  \int u^{\otimes n} (z_1, \ldots, z_n) \lambda_t^{\otimes k}(\dd(z_1,\ldots, z_k))
\otimes (S^{\otimes (n-k)}_t)^*\eta^{(n-k)} (\dd(z_{k+1}, \ldots, z_{n}))
\eeq
Via the above mentioned density argument of linear combinations of $u^{\otimes n}$ this implies \eqref{eq: inter mult dim diff proc}.

Recalling the definition of $\lambda_t(\dd z)$ we have that 
\begin{align*}\lambda_t^{\otimes k} (\dd(z_1,\ldots, z_k)) &=  \left( \prod_{i=1}^{k} \int_{\partial  \mathfrak D} \lambda(u) \P^{\rm{abs}}_{z_i}( \tau_{\partial  \mathfrak D}\le t, X_{\tau_{ \partial  \mathfrak D}} \in\dd u )\right)\mu^{\otimes k}(\dd(z_1,\ldots, z_k))\\
&=\E^{\rm{abs}}_{\boldsymbol z_{[k]}}\left[e^{\int_{\partial\mathfrak D} \log (\lambda)\dd\xi_t}\boldsymbol 1_{\{\xi_t(\partial \mathfrak D)=k\}} \right]\mu^{\otimes k}(\dd(z_1,\ldots, z_k)).
\end{align*}

If now we assume that \eqref{eq: cond for duality} holds and choosing  $\mu = m$,
we obtain, integrating a bounded and permutation invariant function $f_n:\mathfrak  D^n\to\R$ 
\begin{align*}
&\E^{\lambda}_\eta\left[  \int_{\mathfrak  D^n} f_n (z_1,\ldots, z_n) \eta^{(n)}_t(\dd(z_1,\ldots, z_n))   \right]\\
&=\sum_{k=0}^n {n\choose k} \E^{\rm{abs}}_{\boldsymbol z_{[k]}}\left[e^{\int_{\partial\mathfrak D} \log (\lambda)\dd\xi_t}\boldsymbol 1_{\{\xi_t(\partial \mathfrak D)=k\}} \right]\int_{\mathfrak  D^n}f_n(\boldsymbol z) m^{\otimes k}(\dd\boldsymbol z_{[k]})
\otimes (S^{\otimes (n-k)}_t)^*\eta^{(n-k)} (\dd \boldsymbol z_{[n]\setminus [k]})\\
&=\sum_{k=0}^n {n\choose k} \E^{\rm{abs}}_{\boldsymbol z_{[k]}}\left[e^{\int_{\partial\mathfrak D} \log (\lambda)\dd\xi_t}\boldsymbol 1_{\{\xi_t(\partial \mathfrak D)=k\}} \right]\\
&\quad \times \int_{\mathfrak  D^n}\left(\int_{\mathfrak  D^{n-k}}f_n(\boldsymbol z_{[k]},\boldsymbol y)\mathfrak p_t^{(n-k)}(\boldsymbol z_{[n]\setminus [k]}, \boldsymbol y)m^{\otimes n-k}(\dd \boldsymbol y)\right) \; m^{\otimes k}(\dd\boldsymbol z_{[k]})
\otimes \eta^{(n-k)} (\dd \boldsymbol z_{[n]\setminus [k]})
\end{align*}
where we recall that $\mathfrak p^{(r)}_t((v_1,\ldots,v_r),(u_1,\ldots,u_r))=\prod_{i}^r\mathfrak p_t(v_i,u_i)$.
Exchanging the integrals and using the the symmetry of the functions $\mathfrak p_t(\cdot,\cdot)$ leads to 
\begin{align*}
&\E^{\lambda}_\eta\left[  \int_{\mathfrak  D^n} f_n (z_1,\ldots, z_n) \eta^{(n)}_t(\dd(z_1,\ldots, z_n))   \right]\\
&=\sum_{k=0}^n {n\choose k} \int_{\mathfrak  D^n} m^{\otimes k}(\dd\boldsymbol z_{[k]})\otimes m^{\otimes n-k}(\dd \boldsymbol y) f_n(\boldsymbol z_{[k]},\boldsymbol y) \\&\quad \times \left(  \E^{\rm{abs}}_{\boldsymbol z_{[k]}}\left[e^{\int_{\partial\mathfrak D} \log (\lambda)\dd\xi_t}\boldsymbol 1_{\{\xi_t(\partial \mathfrak D)=k\}} \right]\int_{\mathfrak  D^{n-k}}\mathfrak p_t^{(n-k)}( \boldsymbol y,\boldsymbol z_{[n]\setminus [k]}) \;    \eta^{(n-k)} (\dd \boldsymbol z_{[n]\setminus [k]})           \right)
\end{align*}
which, upon renaming the variables, can be rewritten as
\allowdisplaybreaks\begin{align*}\label{cat}
\E^{\lambda}_\eta\left[  \int_{\mathfrak  D^n} f_n \dd\eta^{(n)}_t   \right]
=&\int_{\mathfrak  D^n}  f_n(\boldsymbol z) \left(\sum_{k=0}^n {n\choose k}   \E^{\rm{abs}}_{\boldsymbol z_{[k]}}\left[e^{\int_{\partial\mathfrak D} \log (\lambda)\dd\xi_t}\boldsymbol 1_{\{\xi_t(\partial \mathfrak D)=k\}} \right]\right.\\&\left.\qquad \times \int_{\mathfrak  D^{n-k}}\mathfrak p_t^{(n-k)}(\boldsymbol z_{[n]\setminus [k]},\boldsymbol y)  \;   \eta^{(n-k)} (\dd\boldsymbol y)           \right)m^{\otimes n}(\dd\boldsymbol z).
\end{align*}
By taking the symmetrization of the above expression in brackets in the right hand side we obtain \eqref{eq: duality mult dim diff} and the proof is concluded .
\epr

\vskip.5cm

 We conclude this section by looking  at the evolution of a Poisson distributed particle cloud and by using duality to show the existence and the uniqueness of the stationary distribution for the system of boundary driven independent particles.
Let $\rho$ be a finite measure on $\mathfrak D$ 
and  denote by ${\cal P}_{\rho}$ the Poisson point configuration with intensity $\rho$.

\bt\label{theorem: stationary measure}
Let $\{\eta_t,\, t\ge 0\}$ be the boundary driven diffusion gas in the domain $\mathfrak D$  defined in 
\eqref{MDG} and let  $\mu(dx)$ be the finite measure on $\mathfrak D$ appearing in \eqref{eq: lambda t multidim BM}.\noindent
\begin{itemize}
\item[i)] If $\eta_0$ is distributed according to ${\cal P}_\rho$, then $\eta_t$ is the restriction to $\mathfrak D$ of the Poisson process on $\bar{\mathfrak  D}$ with intensity 
\be\label{pois}
\rho_t= S^*_t\rho + \lambda_t
\ee
with $\lambda_t$  defined in \eqref{intens}.
\item[ii)] Assume further \eqref{eq: condition for Markov} and take $\mu=m$. Then, the unique stationary measure for the boundary driven diffusion process is given by the distribution of a  Poisson point process with
intensity $$\lambda_{\infty}(\dd x)=h(x) m (\dd x)$$ where 
$$h(x)=\lambda(\infty, x)= \int_{\partial  \mathfrak D} \lambda(u) \P^{\rm{abs}}_{x}(  X_{\tau_{ \partial  \mathfrak D}} \in\dd u ).$$

Moreover, for any initial configuration $\eta$, the distribution of $\eta_t$ converges weakly as $t\to \infty$ to the distribution of the Poisson point process with intensity $\lambda_\infty(\dd x)=h(x) m(\dd x)$.
\end{itemize}
\et

\begin{remark}
Notice that, when $\lambda$ is a continuous function, $h:\mathfrak D\to \R$ given in Theorem \ref{theorem: stationary measure}(ii) is the solution of the following Dirichlet problem 
\begin{equation}
\begin{cases}
\mathcal L h=0 & \text{in }\mathfrak D\\
h=\lambda& \text{ on }\partial \mathfrak D
\end{cases}
\end{equation}
where $\mathcal L$ is the generator given in \eqref{gen-diff}.
In particular, for the one-dimensional boundary driven Brownian gas, $\mathcal L=\frac{1}{2}\frac{\dd^2}{\dd x^2}$ and 
$$\lambda_\infty(x)=\ll(1-x)+ \lr x.$$
\end{remark}
\vskip.5cm
\bpr
The evolution of ${\cal P}_{\rho}$ under independent copies of absorbed particles $\{X_t, t\ge 0\}$  is equal to $\Pi_{S^*_t \rho}$ by the Doob's theorem (Theorem \ref{doob}). Therefore \eqref{pois} follows from the fact that the independent
sum of two Poisson point processes is a Poisson point process with intensity measure the sum of the intensity measures.
Further,  notice that for every finite measure $\mu$ on $\mathfrak D$, we have
\[
(S_t^{\otimes n})^*\mu^{\otimes n}\longrightarrow 0
\]
as $t\to\infty$ because eventually all the mass from $\mu$ will be absorbed at the boundary $\partial \mathfrak D$.
Therefore, 
by taking the limit $t\to\infty$ in \eqref{eq: inter mult dim diff proc},  only the term $k=n$ survives and thus, the
$n$-th factorial moment measures converge to $\lambda_\infty^{\otimes n}(\dd \boldsymbol x)=\left(\prod_{i=1}^nh(x_i)\right) \mu^{\otimes n}(\dd \boldsymbol x)$ with $$h(x)=\lim_{t\to\infty}\lambda(t,x)=  \int_{\partial  \mathfrak D} \lambda(u) \P^{\rm{abs}}_{x}(  X_{\tau_{ \partial  \mathfrak D}} \in\dd u ).$$
This shows that the limiting distribution of $\eta_t$ is indeed Poisson with intensity measure $\lambda_{\infty}$. Since \eqref{eq: condition for Markov} implies that $\{\eta_t, t\ge 0\}$ is Markov, we conclude that the distribution of the Poisson point process with intensity $\lambda_{\infty}$ is the unique stationary measure.
\epr

\subsection{Boundary driven Markov gas }\label{section: general boundary driven gas}
In this section we provide another extension of the construction of Bertini and Posta \cite{bertini} for systems of particles that can make jumps and thus,  they do not necessarily hit the boundary when exiting a regular domain. Therefore, instead of associating a reservoir parameter function $\lambda$  to the boundary of the domain only, we need to associate it rather to the complement of the domain. We therefore consider  particles that evolve on a regular domain and are absorbed upon hitting a point in the complement of this domain.  The examples that we have in mind are jump Markov processes (see, e.g., \cite[Eq. 4]{kkoss}) with generator given by 

\be\label{FKD}
{\cal L}f(x)= \int_{\R^d}a(x-y) \left[f(y)-f(x)\right] \, \dd y \, ,
\ee
with $a(-x)=a(x)$ and $f:\R^d\to \R$ a Borel measurable function with compact support (a system of particles evolving accordingly to \eqref{FKD} is then called \textit{free Kawasaki dynamics})
and standard rotationally symmetric $\alpha$ stable processes (see, e.g., \cite{cmn2012}) with generator given by 
\begin{equation}\label{eq: gen alpha stab proc}
\mathcal L= \Delta^{\alpha/2}
\end{equation}
for $\alpha\in(0,2)$.

Let $\{Y_t, t\ge 0\}$ be a strong Markov process on $\R^d$. Let $\mathfrak D$ be regular domain of $\R^d$ (see Section \ref{section: boundary driven independent particles} ) and define $\mathfrak D^{\rm{ext}} :=\R^d\setminus \mathfrak D$.

Let $\{X_t, t\ge 0\}$ be the Markov process on $\R^d$ with semigroup $\{S_t, t\ge 0\}$ which evolves as $\{Y_t, t\ge 0\}$ on $\mathfrak D$ and is absorbed  upon hitting  $\mathfrak D^{\rm{ext}} $. We denote by  $\P^{\rm{abs}}_x$ the distribution of the trajectories of  $\{X_t, t\ge 0\}$ starting from $x\in \mathfrak D$,  by $\tau_{\mathfrak D^{\rm{ext}}}$ the hitting time of the set $\mathfrak D^{\rm{ext}}$. We assume  $\P^{\rm{abs}}_x(\tau_{\mathfrak D^{\rm{ext}}}<\infty)=1$.

  Let now $\lambda: \mathfrak D^{\rm{ext}}\to \R_+$ be a bounded measurable function giving the reservoir intensity at any $x\in\mathfrak D^{\rm{ext}}$ and let $\mu(dx)$ be a finite measure on $\mathfrak D$. We  then define the point configuration (on $\R^d$) valued process $\{\xi_t, t\ge 0\}$  arising from  independent copies  of the absorbed Markov  process $\{X_t, t\ge 0\}$ starting from $\xi_0=\sum_i \delta_{x_i}$, $x_i\in \mathfrak D$. I.e., for $\xi:=\sum_{i=1}^\NN\delta_{x_i}$, $x_i\in\mathfrak D$, we define 
  $$\begin{cases}\xi_t:=\sum_{i=1}^\NN\delta_{X_{t}(i)}\ ,&\\
  \xi_0=\xi\end{cases}$$
  where $\{X_t(i)\}_{t\ge 0}$ are independent copies of $\{X_t\}_{t\ge 0}$ such that $X_0(i)=x_i$ for any $i\in[\NN]$. For $\boldsymbol x=(x_1,\ldots,x_\NN)\in \mathfrak D^\NN$, we denote by $\E^{\rm{abs}}_{\boldsymbol x}$ the expectation in
  the process $\{\xi_t, t\ge 0\}$ starting from $\xi_0=\sum_{i=0}^N\delta_{x_i}$. 
    Finally we define $\Theta_t$, a Poisson point process on $ \mathfrak D$ independent of $\xi_t$ and with time dependent intensity $\lambda_t(\dd x)$ given by 
\begin{equation}\label{eq: lambda t multidim MP}
\lambda_t (\dd x)=\left(\int_{\mathfrak D^{\rm{ext}}} \lambda (y) \, \P^{\rm{abs}}_x( \tau_{\mathfrak D^{\rm{ext}} }\le t, X_{\tau_{\mathfrak D^{\rm{ext}} }} \in\dd y )\right) \mu(\dd x),
\end{equation}
which is supposed to be finite.
The \textit{boundary driven Markov  gas in the domain} $\mathfrak D$ \textit{with reservoir intensity} $\lambda$, denoted by $\{\eta_t, t\ge 0\},$ is then given, for any $t\ge 0$, by 
\be\label{MMG}
\eta_t=\xi_t {\big |}_{\mathfrak D}+\Theta_t,
\ee
viewed as a point configuration on $\mathfrak D$. We denote by $\E^\lambda_\eta$ the expectation in the process
defined via \eqref{MMG} initialized from $\eta$.

Also in this context, $\{\eta_t,\, t\ge 0\}$ is a  Markov process when the transition probability $p_t(x,\dd y)$ of the process $\{X_t, t\ge 0\}$ satisfy 
\be
p_t(x,\dd y) m(\dd x)=p_t(y,\dd x) m(\dd y) \qquad \text{on} \qquad  \mathfrak D \times \mathfrak D
\ee 
for a finite measure $m$ and for the reservoir  intensity in \eqref{eq: lambda t multidim BM} we choose $\mu=m$ (see   Theorem \ref{Markovianity} in the appendix).

 We then have the following result generalizing Theorem \ref{theorem: self-dual bound driv diff}. We omit the proof being a straightforward adaptation of the proof of  Theorem \ref{theorem: self-dual bound driv diff}. 
\begin{theorem}\label{theorem: inter a dual general gas}
		Let $\{\eta_t, t\ge 0\}$ be boundary driven Markov gas defined in \eqref{MMG}. Then for all $n\in\N$ and $t\ge 0$, it holds:
		\begin{itemize}
	\item[a)] for all bounded, measurable and permutation invariant  $f:\mathfrak D^n \to \R$ 
	\begin{equation}\label{eq: intertwining general bound driven}
	\E^\lambda_\eta\left[\int_{\mathfrak D^n} f(\boldsymbol z)\eta^{(n)}_t (\dd \boldsymbol z)  \right] = \sum_{k=0}^n {n\choose k} \int_{\mathfrak D^n} f(\boldsymbol z) \lambda_t^{\otimes k}(\dd\boldsymbol z_{[k]})
	\otimes (S^{\otimes n-k}_t)^*\eta^{(n-k)} (\dd \boldsymbol z_{[n]\setminus [k]});
	\end{equation}
	\item[b)] assume further that \begin{equation}\label{eq: cond dualk 2}
	p_t(x,\dd y)=\mathfrak p_t(x,y) m(\dd y)
	\end{equation} for a symmetric function $\mathfrak p_t(x,y)$ and a finite measure $m$ on $\mathfrak D$. Then,	choosing $\mu=m$, the following holds
	
	\begin{align}
	\frac{\dd\E^{\lambda}_\eta[\eta_t^{(n)} ]}{\dd m^{\otimes n }} (\boldsymbol z)=\sum_{I\subset [n]} \E^{\rm{abs}}_{\boldsymbol z_{I}}\left[e^{\int_{\mathfrak D^{\rm{ext}}} \log (\lambda)\dd\xi_t}\boldsymbol 1_{\{\xi_t(\mathfrak D^{\rm{ext}})=|I|\}} \right]	
	\int_{\mathfrak D^{n-|I|}}\mathfrak p_t^{(n-|I|)}(\boldsymbol z_{[n]\setminus I},\boldsymbol y)     \eta^{(n-|I|)} (\dd\boldsymbol y) 
	\end{align}
	\end{itemize}
\end{theorem}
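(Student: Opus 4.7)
The plan is to follow exactly the generating-function strategy used in the proof of Theorem \ref{theorem: self-dual bound driv diff}, with the only structural change being that the role previously played by the boundary $\partial \mathfrak D$ is now played by the exterior set $\mathfrak D^{\rm{ext}}$, while the role of the hitting time $\tau_{\partial \mathfrak D}$ is taken over by $\tau_{\mathfrak D^{\rm{ext}}}$. First I would fix a bounded measurable $u:\mathfrak D \to \R$ and, by independence of $\Theta_t$ and $\xi_t$, factorize
\[
\E^{\lambda}_\eta\!\left[\exp\!\left(\int_{\mathfrak D} \log(1-u(z))\,\eta_t(\dd z)\right)\right]
= \E_{\mathcal P_{\lambda_t}}\!\left[e^{\int \log(1-u)\dd \Theta_t}\right]\cdot \E^{\rm{abs}}_{\eta}\!\left[e^{\int \log(1-u)\dd \xi_t}\right].
\]
Here $\xi_t|_{\mathfrak D}$ may be replaced by $\xi_t$ inside the exponential because $u$ is supported on $\mathfrak D$. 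Then apply the identity \eqref{last} to each factor: for the Poisson factor this gives a power series in $\lambda_t^{\otimes k}$, and for the second factor Theorem \ref{floreanithm}(a) applies (since $\{\xi_t, t\ge 0\}$ is a system of independent copies of the Markov process $\{X_t, t\ge 0\}$) producing the series $\sum_n \frac{(-1)^n}{n!}\int u^{\otimes n}\,(S_t^{\otimes n})^*\eta^{(n)}$.

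Next, expand the product of the two series into a double sum, group by total degree $n=k+l$, and equate with the series expansion of $\E^{\lambda}_\eta\bigl[e^{\int \log(1-u)\dd\eta_t}\bigr]$ obtained directly from \eqref{last}. Matching coefficients of $u^{\otimes n}$ and invoking the standard density argument for symmetric functions (linear combinations of $u(z_1)\cdots u(z_n)$ being dense in the symmetric functions on $\mathfrak D^n$) yields \eqref{eq: intertwining general bound driven}, proving part (a).

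For part (b), I would rewrite the product intensity using \eqref{eq: lambda t multidim MP} as
\[
\lambda_t^{\otimes k}(\dd(z_1,\ldots,z_k)) = \E^{\rm{abs}}_{\boldsymbol z_{[k]}}\!\left[e^{\int_{\mathfrak D^{\rm{ext}}} \log(\lambda)\dd \xi_t}\boldsymbol 1_{\{\xi_t(\mathfrak D^{\rm{ext}})=k\}}\right] \mu^{\otimes k}(\dd(z_1,\ldots,z_k)),
\]
which is immediate from the independence of the $k$ absorbed trajectories starting at $z_1,\ldots,z_k$. Choosing $\mu=m$ and using \eqref{eq: cond dualk 2} to convert $(S_t^{\otimes n-k})^*\eta^{(n-k)}$ into an integral against $\mathfrak p_t^{(n-k)}\,\eta^{(n-k)}$ times $m^{\otimes(n-k)}$, the joint expression factorizes over a reference product measure $m^{\otimes n}$. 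Integrating against an arbitrary bounded permutation-invariant $f$, exchanging the order of integration (justified by the assumed finiteness of $\lambda_t$ and boundedness of $f$), using the symmetry $\mathfrak p_t(x,y)=\mathfrak p_t(y,x)$, and finally symmetrizing the binomial sum $\sum_{k=0}^n \binom{n}{k}$ into the sum over subsets $I\subset [n]$ of cardinality $k$ gives the density formula in part (b).

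The only substantive conceptual point — and the piece to double-check rather than copy mechanically from the diffusion proof — is that the application of Theorem \ref{floreanithm}(a) to $\{\xi_t, t\ge 0\}$ is legitimate here: this is fine because $\{X_t, t\ge 0\}$ is a Markov process on $\R^d$ (its semigroup $\{S_t, t\ge 0\}$ is well defined regardless of whether absorption occurs through the boundary or via a jump into $\mathfrak D^{\rm{ext}}$), so the intertwining identity \eqref{duality} holds verbatim. Everything else is a mere notational substitution of $\mathfrak D^{\rm{ext}}$ for $\partial \mathfrak D$, which is why the result and its proof parallel Theorem \ref{theorem: self-dual bound driv diff} exactly.
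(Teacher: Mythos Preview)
Your proposal is correct and follows exactly the approach the paper intends: the paper explicitly omits the proof of Theorem \ref{theorem: inter a dual general gas} as ``a straightforward adaptation of the proof of Theorem \ref{theorem: self-dual bound driv diff}'', and your argument carries out precisely that adaptation, with the substitution $\partial\mathfrak D \mapsto \mathfrak D^{\rm{ext}}$ and $\tau_{\partial\mathfrak D}\mapsto \tau_{\mathfrak D^{\rm{ext}}}$ throughout. Your additional remark that Theorem \ref{floreanithm}(a) still applies because $\{X_t,t\ge 0\}$ is a bona fide Markov process on $\R^d$ (regardless of whether absorption occurs by hitting the boundary or by jumping into $\mathfrak D^{\rm{ext}}$) is the one point worth making explicit, and you have identified it correctly.
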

\begin{remark}[Examples]
	\begin{itemize}
	\item[i)] The process $\{Y_t, t\ge 0\}$ with generator given in \eqref{FKD} is  reversible with respect to the Lebesgue measure (see, e.g., \cite[Remark 2.7]{kkoss}) but \eqref{eq: cond dualk 2} is not satisfied since each particle has a positive probability to stay in the initial position during any time interval $[0,t]$.
	\item[ii)]   A spherically symmetric  $\alpha$-stable processes on $\R^d$ with generator given in \eqref{eq: gen alpha stab proc} is strongly reversible w.r. to the Lebesgue measure (see, e.g. \cite[Eq. 4.4]{cmn2012}) and \eqref{eq: cond dualk 2} is fulfilled.
	\end{itemize}
\end{remark}
\section{The discrete case}
In this section we consider the discrete analogue of the boundary driven Brownian gas. Here by ``discrete'' we mean that the space on which the particles evolve is a lattice and the independent Brownians are replaced  by independent random walks.
Our first aim will be to show that such a process is equal   (in distribution) to the {\em reservoirs  process}  $\{\zeta_t, \, t\ge 0\}$ defined via the generator in \eqref{eq: IRW with Res CGGR}. We will then show how the boundary driven Brownian gas arises as a scaling limit of $\{\zeta_t, \, t\ge 0\}$.
\subsection{On the equivalence of two definitions of boundary driven independent random walks}
 We  consider the boundary driven Markov gas as explained  in Section \ref{section: general boundary driven gas}, where the process $\{Y_t, t\ge 0\}$ is chosen to be  the rate $\frac{1}{2}$ symmetric nearest neighbor random walk jumping on the integers and domain $\mathfrak D=V_N=\{1, \ldots, N\}$ with boundary $\{0,N+1\}$. The restriction to the nearest neighbor case is for simplicity only. The generalization to independent walkers with generic jump rates $c(x,y)$, $x,y\in \Z$, absorbed upon leaving $V_N$ is straightforward and so is the extension to more general graphs.
Let  $\widetilde V_N:=\{0, \ldots, N+1\}=V_N \cup \{0, N+1\}$ and 
 $\{X_t, t\ge 0\}$ be the process evolving as $\{Y_t, t \ge 0\}$ on $V_N$ and absorbed when hitting $0$ or $N+1$. Notice that in this context of nearest neighbor random walks, $\mathfrak D^{\rm{ext}}$ reduces to $\{0, N+1\}$. 
We start the process from an initial configuration $\eta\in \N^{ V_N}$,  viewed as a point configuration on $ V_N$, i.e. $\eta=\sum_i \delta_{x_i}$, where $x_i\in  V_N$ are the initial positions of the particles.  We define its time evolution as follows:
\be\label{bikost}
\eta_t= \xi_t{\big |}_{V_N}+ \Theta_t.
\ee
Here  $\xi_t$ is  the point configuration on $\tilde V_N$ at time $t$ arising from $\xi_0=\eta$ when all the particles in $\eta$  evolve as  independent copies of the process $X_t$ defined above. For $\boldsymbol z=(z_1,\ldots,z_n)\in V_N^n$, we denote by $\E^{\rm{abs}}_{\boldsymbol z}$ the expectation in the process $\{\xi_t, t\ge 0\}$ started from $\sum_{i=1}^n \delta_{z_i}$.
Further, for $\lambda=(\lambda_L,\lambda_R)\in\R^2_+$, $\Theta_t$ is a Poisson point process on $ V_N$  with intensity  defined by
\be\label{bolanko}
\lambda_t(\dd x)=\left( \ll\pee^{\rm{RW}}_x (X_t=0) + \lr \pee^{\rm{RW}}_x (X_t=N+1) \right) m(\dd x)
\ee
with $m(\dd x)$ denoting the counting measure and $\pee^{\rm{RW}}_x$ the path-space measure of the absorbed random walk $\{X_t, t\ge 0\}$. Thus,  $\{\Theta_t(\{x\}), x\in  V_N\}$ are independent random variables which are Poisson distributed with
parameter $\lambda_t(x)$. The process defined in \eqref{bikost} is the discrete analogue of the process defined in \eqref{ka} and from Theorem \eqref{theorem: inter a dual general gas} applied to this context we have that 
\be\label{eq: duality bound driv RW}
\frac{\dd\E^{\lambda}_\eta[\eta_t^{(n)} ]}{\dd m^{\otimes n}} (\boldsymbol z)=\sum_{I\subset [n]}  \E^{\rm{abs}}_{\boldsymbol z_{I}}\left[ \ll^{\xi_t(\{0\})}\lr^{\xi_t(\{1\})}\boldsymbol 1_{\{\xi_t(\{0,N+1\})=|I|\}} \right] \int_{V_N^{n-|I|}}\mathfrak p_t^{(n-|I|)}(\boldsymbol z_{[n]\setminus I},\boldsymbol y)     \eta^{(n-|I|)} (\dd\boldsymbol y) ,      
\ee
where $\mathfrak p_t^{(n)}(\boldsymbol z,\boldsymbol y)=\prod_{i=1}^n\pee^{\rm{RW}}_{z_i}(X_t=y)$ and $\E^{\lambda}_\eta$ denotes the expectation in the process $\{\eta_t, t\ge 0\}$ starting from $\eta$.

\vskip.3cm

Let us now compare the process $\eta_t$ with the process $\zeta_t$, the reservoir process with parameters $\ll, \lr$ introduced in Section \ref{section: recap duality}.

\bt\label{berta}
Let $\eta\in \N^{V_N}$.
Then $\{\zeta_t, t\ge 0\}$, denoting the reservoir process with parameters $\ll, \lr$ and  generator given  in \eqref{eq: IRW with Res CGGR} started from $\eta$, and
$\{\eta_t, t\ge 0\}$, denoting the boundary driven Markov gas defined in \eqref{bikost} started from $\eta$,  are equal in distribution.
\et
Notice that in the statement of the Theorem we are implicitly identifying the point configuration $\eta_t$ with the vector $(\eta_t(\{x\}))_{x\in V_N}$ of occupation variables.

\bpr
In order to prove the result we will make use of the duality relations \eqref{reservoirdual}  and \eqref{eq: duality bound driv RW}.

Indeed,  it suffices to show that for all $\xi=\sum_{i=1}^n\delta_{z_i}$, $z_i\in \N^{V_N}$  one has for all $\eta$ and $t\ge 0$
\be\label{bifi}
\E_\eta^{\rm{res}} \left[    \prod_{x=1}^N d(\xi(\{x\}), \zeta_t(x))\right]= \E^{\lambda}_{{\eta}}\left[   \prod_{x=1}^N d(\xi(\{x\}), \eta_t(\{x\}))\right].
\ee
By \eqref{reservoirdualfunction} and \eqref{reservoirdual} we have
\begin{align*}
\E_\eta^{\rm{res}} \left[     \prod_{x=1}^N d(\xi(\{x\}), \zeta_t(x))\right] =\E^{\rm{abs}}_{{\xi}} \left[  \ll^{\xi_t(\{0\})} \lr^{\xi_{t}(\{N+1\})} \prod_{x=1}^N d\left(\xi_t(\{x\}), \zeta(x)\right) \right].
\end{align*}
On the other hand, by \eqref{eq: relation fact meas clas dual}, we have
$$\E^{\lambda}_{{\eta}}\left[   \prod_{x=1}^N d(\xi(\{x\}), \eta_t(\{x\}))\right]=\E^{\lambda}_{\eta}[\eta^{(n)}_t(\{(z_1,\ldots,z_n)\}) ]$$

and by \eqref{eq: duality bound driv RW}
\begin{align}\label{discfactdens}
&\E^\lambda_\eta\left[ \eta^{(n)}_t(\{z_1,\ldots, z_n\})  \right]\\&=\sum_{I\subset [n]}  \E^{\rm{abs}}_{\boldsymbol z_{I}}\left[ \ll^{\xi_t(\{0\})}\lr^{\xi_t(\{1\})}\boldsymbol 1_{\{\xi_t(\{0,N+1\})=|I|\}} \right] \int_{V_N^{n-|I|}}\mathfrak p_t^{(n-|I|)}(\boldsymbol z_{[n]\setminus I},\boldsymbol y)     \eta^{(n-|I|)} (\dd\boldsymbol y) .
\end{align}
It thus remains to show that 
\begin{multline*}\sum_{I\subset [n]}  \E^{\rm{abs}}_{\boldsymbol z_{I}}\left[ \ll^{\xi_t(\{0\})}\lr^{\xi_t(\{1\})}\boldsymbol 1_{\{\xi_t(\{0,N+1\})=|I|\}} \right] \int_{V_N^{n-|I|}}\mathfrak p_t^{(n-|I|)}(\boldsymbol z_{[n]\setminus I},\boldsymbol y)     \eta^{(n-|I|)} (\dd\boldsymbol y) \\= \E^{\rm{abs}}_{\xi}\left[      \ll^{\xi_t(\{0\})}\lr^{\xi_t(\{N+1\})} \prod_{x \in V_N} d\left(\xi_t(\{x\}), \eta(\{x\})\right)  \right].
\end{multline*}
Notice that, for any $I\subset  [n]$, by \eqref{eq: relation fact meas clas dual}, we have
\begin{align}\label{eq: from pt to dual func}
&\int_{ V_N^{n-|I|}}\mathfrak p_t^{(n-|I|)}(\boldsymbol z_{[n]\setminus I},\boldsymbol y)     \eta^{(n-|I|)} (\dd\boldsymbol y)=\E^{\rm{abs}}_{\boldsymbol z_{[n]\setminus I}}\left[  \prod_{x=1}^N d(\xi_t(\{x\}), \eta(\{x\})) \right]
\end{align}
 We thus have,
\begin{align*}
&\E^\lambda_\eta\left[ \eta^{(n)}_t(\{z_1,\ldots, z_n\})  \right]\\&=\sum_{I\subset [n]}   \E^{\rm{abs}}_{\boldsymbol z_{I}}\left[ \ll^{\xi_t(\{0\})}\lr^{\xi_t(\{1\})}\boldsymbol 1_{\{\xi_t(\{0,N+1\})=|I|\}} \right] \E^{\rm{abs}}_{\boldsymbol z_{[n]\setminus I}}\left[  \prod_{x=1}^N d(\xi_t(\{x\}), \eta(\{x\})) \right]\\
&= \E^{\rm{abs}}_{\boldsymbol z}\left[\ll^{\xi_t(\{0\})}\lr^{\xi_t(\{N+1\})}   \prod_{x=1}^N d(\xi_t(\{x\}), \eta(\{x\}))\right],
\end{align*}
 where we used \eqref{discfactdens} and \eqref{eq: from pt to dual func} in the first equality and the independence of particles  in the second equality.
\epr

\subsection{Scaling limit}\label{section: scaling limit}
In this section we show how the process of independent random walkers with reservoirs $\ll$ and $\lr$, when appropriately rescaled in space
and time, and with rescaling of the reservoirs intensities, converges to the boundary driven Brownian gas.
We start with the following lemma.
\bl\label{Poissonlem}
Consider $\{\Theta^{(N)}\}_{N\ge1}$ a sequence of Poisson point processes on $(0,1)$ with the intensity measures
\be\label{bingo}
\lambda^{(N)}(\dd x)=\left(\frac1N\sum_{i=1}^N a_N(\tfrac i N)\delta_{i/N}  \right)(\dd x)
\ee
with $a_N:\{\tfrac{1}{N},\ldots,\tfrac{N-1}{N},1\}\to \R_+$.
Assume furthermore that whenever $i/N\to x\in [0,1]$ then also
\be\label{conva}
a_N( \tfrac i N)\to \alpha(x)
\ee
where $\alpha: [0,1]\to\R$ is a smooth function.
Then as $N\to\infty$, $\Theta^{(N)}$ converges to the Poisson point process $\Theta$ with intensity $\alpha(x) \dd x$.
\el
\bpr
Because sequences of Poisson point processes converge when the sequences of their intensity measures converge, it suffices to prove that \eqref{bingo} converges weakly to $\alpha(x) \dd x$ as $N\to \infty$. Let $f:[0,1]\to\R$ continuous, then
\[
\int f(x) \lambda^{(N)}(\dd x)=\frac1N \sum_{i=1}^N f(\tfrac{i}{N}) a_N(\tfrac iN)
\]
By the condition on $a_N(\tfrac iN)$, this sum converges to the Riemann integral $\int_0^1 f(x) \alpha(x) dx$.
\epr
\vskip.3cm
\noindent
We then have the following result. 
\bt
Consider the reservoir process $\{\zeta_{N,t},t\geq 0\}$ on the chain $\{1, \ldots, N\}$,  with reservoirs parameters $\frac{\ll}{N},\frac{\lr}{N}$ and generator given in \eqref{eq: IRW with Res CGGR}.
Define $\eta_{N,t} (\dd x)$ the point configuration on $[0,1]$ via
\be\label{rescaledeta}
{\cal Z}_{N,t}(\dd x)= \left(\sum_{i=1}^N \zeta_{N,tN^2}(i) \delta_{i/N}\right)(\dd x)
\ee
Assume that  at time $t=0$,
\be
{\cal Z}_{N,0}= \sum_{i=1}^\NN\delta_{x^{(N)}_i/N}
\ee
where $x^{(N)}_i/N\to x_i\in (0,1)$ for all $i=1,\ldots, \NN$.

Then as $N\to\infty$ the process $\{{\cal Z}_{N,t}( \dd x), t\geq 0\}$ converges (in the sense of convergence of finite dimensional distributions) to the boundary driven  Brownian gas
with  parameters $\ll, \lr$, started at the configuration $\sum_{i=1}^\NN\delta_{x_i}$.
\et
\bpr
As a consequence of Theorem \ref{berta},   the reservoir process $\zeta_{N,t}$ equals (in distribution) the boundary driven Markov gas $\eta_{N,t}$  obtained as a sum of
the configuration arising from letting the particles initially in the system evolve according to independent
random walkers absorbed at $0$ and $N+1$, and adding an independent Poisson point process  on $ V_N$ with intensity
\be\label{addpois}
\lambda_t(i)= \tfrac \ll N\; \pee^{\rm{RW}}_{i} (X_{t}=0) + \tfrac \lr N \; \pee^{\rm{RW}}_{i} (X_{t}=N+1)
\ee
where $\{X_t, t\ge 0\}$ denotes the random walk on $ V_N$ absorbed at the boundary $\{0, N+1\}$. Therefore after diffusive rescaling of space and time, the intensity of the  Poisson point process on $(0,1)$ modelling the reservoirs effect becomes 
\be\label{readpois}
\lambda^{(N)}_{t}( \dd x)= \sum_{i=1}^N \left(\tfrac{\ll}{N} \;\pee^{\rm{RW}}_{i} (X_{tN^2}=0) + \tfrac{\lr}{N}\;\pee^{\rm{RW}}_{i} (X_{tN^2}=N+1)\right)\delta_{i/N}(dx).
\ee
Because the absorbed random walk $X_{tN^2}/N$ converges weakly, as $N\to \infty$, to the  Brownian motion on $[0,1]$ absorbed at the boundaries, we can apply Lemma \ref{Poissonlem}
with
\[
a_N(\tfrac  iN)= \ll \,\pee^{\rm{RW}}_{i} (X_{tN^2}=0) + \lr\, \pee^{\rm{RW}}_{i} (X_{tN^2}=N+1)
\]
which converges, in the sense given by \eqref{conva}, to
\be\label{alph}
\alpha(x)= \ll \, \pee^{\rm{abs}}_x( \tau_L\leq t) + \lr \, \pee^{\rm{abs}}_x (\tau_R\leq t)
\ee
where $\tau_L, \tau_R$ are the hitting times of $0$, resp.\ $1$, and $\pee^{\rm{abs}}_x$ the path space measure of
Brownian motion started from $x$ and absorbed whenever hitting $0,1$.
Therefore, the  Poisson point processes \eqref{readpois} converge to the Poisson point processes with intensity \eqref{alph}.
Clearly, by the weak convergence of the absorbed random walk $X_{tN^2}/N$ to the absorbed Brownian motion, also the point configuration
corresponding to the time evolution of the independent walkers initially in the system converge to the point configuration arising by letting
the particles initially in the system evolve according to independent
absorbed Brownian motions. Because the evolution of the  particles initially in the system and the added Poisson process are independent, in the scaling limit, we obtain the sum of the evolution of the particles  initially in the system and
an independent Poisson point process with intensity \eqref{alph}, which is the boundary driven Brownian gas with reservoir
parameters $\ll, \lr$. 
\epr

\section{Orthogonal dualities}
In order to have a complete analogy with the   duality theory for independent random walks on a finite chain with reservoirs, we now investigate orthogonal dualities for the boundary driven Brownian gas.

\subsection{Known orthogonal dualities}

\paragraph{Closed discrete systems.}
Orthogonal self-duality functions are well known for the system of simple symmetric independent random walkers on $\Z^d$ described in Section \ref{section: recap duality} (see \cite{franceschini_stochastic_2017}, \cite{RS18}). More precisely, for any $\theta>0$, the factorized functions given by 
\begin{equation}\label{eq: ort dualities on Zd}
D_\theta^{\text{or}}(\xi,\eta)=\prod_{x\in \Z^d}C_{\xi(x)}(\eta(x), \theta)
\end{equation}
where $C_k(n,\theta)$ are the Charlier polynomials defined as
\be\label{Charlie}
C_k(n,\theta)=\sum_{\ell=0}^k\binom{k}{\ell}(-\theta)^{k-\ell}(n)_{\ell}
\ee
($(n)_\ell$ denotes the $\ell$-th falling factorial) are self-duality functions for the Markov process $\{\eta_t, \, t\ge 0\}$ with generator given in \eqref{eq: generator IRW}. The dualities in \eqref{eq: ort dualities on Zd} satisfy the following orthogonality relation w.r.t. the measure $\mu^{\rm{rev}}_\theta=\otimes_{x\in\Z^d}\rm{Poisson}(\theta)$ which is reversible for $\{\eta_t, \, t\ge 0\}$: for any $\xi,\, \xi'\in\N^{\Z^d}$
$$\int D_\theta^{\text{or}}(\xi,\eta)D^{\text{or}}(\xi',\eta) \dd \mu^{\rm{rev}}_{\theta}(\eta)=\boldsymbol 1_{\{\xi=\xi'\}}\frac{\xi!}{\theta^{|\xi|}}$$
where $\xi!:=\prod_{x\in\Z^d}\xi(x)!$ and $|\xi|=\sum_{x\in\Z^d}\xi(x)$.

Notice that the relation between orthogonal and classical dualities is   given by (see \cite[Remark 4.2]{FRSor})
\begin{align}
\label{eq: ort dual I}
D^{\rm{or}}_\theta(\xi, \eta)=\sum_{\xi'\le\xi}(-\theta)^{|\xi|-|\xi'|}\binom{\xi}{\xi'} D^{\rm{cl}}(\xi',\eta) =\sum_{I\subset [n]}(-\theta)^{n-|I|}D^{\rm{cl}}\left(\sum_{i\in I}\delta_{y_i},\eta\right),
\end{align}
where $\xi'\le \xi$ means that $\xi'(x)\le \xi(x)$ for any $x\in \Z^d$ and $\binom{\xi}{\xi'}:=\prod_{x\in \Z^d }\binom{\xi(x)}{\xi'(x)}.$
\vskip.4cm
\paragraph{Open discrete systems.}
Let us now reconsider the reservoir process with parameters $\ll,\,\lr$ defined in Section \ref{section: recap duality}.
In \cite{FRSor} the authors proved that the following functions, for $\theta>0$, 
\begin{align}\label{eq: ort dual fede}
D^{\rm{or}}_{\rm{res},\theta}(\xi,\zeta)
&=(\ll-\theta)^{\xi(0)} \Dor_\theta(\xi,\zeta)(\lr-\theta)^{\xi(1)}
\end{align}
with 
$$D_\theta^{\text{or}}(\xi,\zeta)=\prod_{x\in \widetilde V_N}C_{\xi(x)}(\zeta(x), \theta)$$
are duality functions between $\{\zeta_t, t\ge 0 \}$ the Markov process on $ V_N=\{1,\ldots,N\}$ with generator given in \eqref{eq: IRW with Res CGGR} and $\{\xi_t,\, t\ge 0\}$ the system of  random walkers
on $\widetilde V_N=\{0, \ldots, N+1\}$   absorbed at $\{0, N+1\}$. Notice that the orthogonality relation is w.r.t.  $\mu_\theta=\otimes_{x\in  V_N}\rm{Poisson}(\theta)$ which is not stationary for the reservoir process with general parameters $\ll,\,\lr$, but it is reversible for the reservoir process with parameters $\ll=\lr=\theta$, the last case referred as the reservoir process in \textit{equilibrium}.

\vskip.4cm
\paragraph{Closed systems in the continuum.}
Generalizations of orthogonal self-dualities for systems considered in Section \ref{section: self inter and self dual without res}, namely closed systems of independent Markov processes on  general Polish spaces $E$,  has been recently studied in \cite{floreani}.  More precisely, let $\eta_t=\sum_{i=1}^\NN \delta_{X_t(i)}$ with $\{X_t(i), \, t\ge 0\}$ independent copies of a  Markov process on $E$ started from  $x_i$, strongly reversible w.r.t. to a measure $m$. Then, the measure defined for any $t\ge 0$, $n\in \N$ and $\theta>0$ as
\begin{equation}
\eta_t^{(n),\theta}(\dd\boldsymbol z):= \sum_{I\subset [n]} (-\theta)^{n-|I|}\, \eta_t^{(|I|)}(\dd \boldsymbol z_I)\; m^{\otimes^{n-|I|}}(\dd\boldsymbol z_{[n]\setminus I})
\end{equation}
satisfies the following duality relation (see \cite[Corollary 4.2]{floreani})
$$ \frac{\dd\E^{\lambda}_\eta[\eta^{(n),\theta}_t] }{\dd m^{\otimes n}} (z_1,\ldots, z_n)
=
\int \prod_{i=1}^n \mathfrak p_t(z_i, y_i) \eta^{(n),\theta}(\dd(y_1, \ldots, y_n)) $$
and generalizes the orthogonal self-dualities given in \eqref{eq: ort dualities on Zd} in the following sense: 
\begin{itemize}
	\item[i)]  let $$\boldsymbol 1_{\boldsymbol B}(z_1,\ldots,z_n):=\left(\boldsymbol 1_{B_1}^{\otimes d_1}\otimes \ldots\otimes\boldsymbol 1_{B_K}^{d_K}\right)(z_1,\ldots,z_n)$$
for $\boldsymbol B=\{B_1,\ldots,B_K\}$ a family of mutually disjoint sets in $E$ and $\{d_1,\ldots, d_K\}$ such $\sum_{i=1}^K d_i=n$, then 
\begin{equation}\label{eq: relation eta or with ort pol}
\int \boldsymbol 1_{\boldsymbol B}(z_1,\ldots,z_n)\, \eta^{(n),\theta}(d(z_1,\ldots,z_n))=\prod_{\ell=1}^K  (-\theta m(B_\ell))^{d_\ell} \ C_{d_\ell}(\eta(B_\ell);\theta m(B_\ell))
\end{equation}
with $C_k(n,x)$ being the Charlier polynomials defined above.
\item[ii)] If we denote by  ${\cal P}_{\theta m}$ the distribution of a Poisson point process with intensity measure $\theta m$, then,  the following  orthogonal relation holds
\begin{align}\label{eq: ort rel eta or}
\E_{{\cal P}_{\theta m}} \left[\left(  \int f_n \dd \zeta^{(n),\theta}\right) \left(  \int g_{n'} \dd \zeta^{(n'),\theta}\right)\right] = \boldsymbol 1_{\{n=n'\}}\cdot  n! \int f_n \, g_n \, \dd (\theta m)^{\otimes n}
\end{align}
for $\zeta \sim{\cal P}_{\theta m}$ and  $f_n:E^n\to\R$, $g_{n'}:E^{n'}\to \R$ bounded and permutation invariant functions.
\end{itemize}
We refer to \cite{last} for a proof of the two above facts.
\vskip.3cm
The aim of the next section is to generalize the orthogonal dualities for the reservoir system given in \eqref{eq: ort dual fede} in the context of the boundary driven Brownian gas on $(0,1)$.

\subsection{Orthogonal dualities for the boundary driven Brownian gas}
Let us now consider the boundary driven Brownian gas on $(0,1)$ with parameters $\ll$ and $\lr$
$$\eta_t=\xi_t+\Theta_t$$
defined in Section \ref{bertsec}. We have previously proved that the factorial measure $\eta_t^{(n)}$ is the right object to study in order to have a duality result for boundary driven system. Inspired by the relation highlighted in the previous subsection between classical and orthogonal dualities we now  study  for any $n\in \N$ and $\theta>0$
\begin{equation}\label{eq: ort dual BDBG}
\eta_t^{(n),\theta}(\dd\boldsymbol z):= \sum_{I\subset [n]} (-\theta)^{n-|I|}\, \eta_t^{(|I|)}(\dd\boldsymbol z_I)\; m^{\otimes^{n-|I|}}(\dd\boldsymbol z_{[n]\setminus I}),
\end{equation}
viewed as a measure on $(0,1)^n$.
 Here $m(\dd z)$ is the Lebesgue measure on $(0,1)$ and the orthogonality properties \eqref{eq: relation eta or with ort pol} and \eqref{eq: ort rel eta or} hold for \eqref{eq: ort dual BDBG} for, respectively,  $\boldsymbol B=\{B_1,\ldots,B_K\}$ a family of mutually disjoint sets in $(0,1)$ with $\{d_1,\ldots, d_K\}$ such $\sum_{i=1}^K d_i=n$, and  bounded and permutation invariant functions $f_n:(0,1)^n\to\R$, $g_{n'}:(0,1)^{n'}\to \R$.

 Notice that the orthogonality relations holds true w.r.t. the intensity measure of the Poisson point process whose distribution is reversible for the  boundary driven Brownian gas \textit{in equilibrium}, namely with  $\ll=\lr=\theta$.

 Moreover, since we will integrate the above defined  measure $\eta^{(n),\theta}$ against $\mathfrak p_t^{(n)}(\cdot,\cdot):[0,1]^n\times[0,1]^n\to\R$, i.e. a function defined on $\{0,1\}$ as well, we extend $\eta^{(n),\theta}$ in the following way: we define $\bar m(d z)=m(dz)+\delta_0(dz)+\delta_1(dz)$ and we denote
 \begin{equation}\label{eq: extension eta or}
 \eta_t^{[n],\theta}(\dd{\boldsymbol z}):= \sum_{I\subset [n]} (-\theta)^{n-|I|}\, \eta_t^{(|I|)}(\dd{\boldsymbol z}_I)\; {\bar m}^{\otimes^{n-|I|}}(\dd {\boldsymbol z}_{[n]\setminus I})\end{equation}
  whenever integrated against functions being non zero also at the boundary $[0,1]$. Notice that $\int_{[0,1]}\mathfrak p_t(x,y)\bar m(\dd y)=1$ for any $x\in [0,1]$ and that we used the brackets $[\cdot]$ in the upper index of $\eta_t^{[n],\theta}$ to emphasize the difference with $\eta_t^{(n),\theta}$.

\vskip.3cm
We then have the following theorem, providing orthogonal dualities between the boundary driben Brownian gas  and the system of independent Brownian motions on $[0,1]$ absorbed at the boundaries.

\begin{theorem}\label{theorem: ort dual BDBG}
For the boundary driven Brownian gas, the expectation of the measure  given in \eqref{eq: ort dual BDBG} at time $t\ge 0$ is absolutely continuous w.r.t. $m^{\otimes n}$ with the following density:
\begin{multline}
 \frac{\dd\E^{\lambda}_\eta[\eta^{(n),\theta}_t] }{\dd m^{\otimes n}} (\boldsymbol z)=\sum_{J\subset [n]}\E^{\rm{abs}}_{\boldsymbol z_J}\left[  \ll^{\xi_t(\{0\})}\lr^{\xi_t(\{1\})}\boldsymbol 1_{\{\xi_t(\partial E)=|J|\}} \right]\int_{E^{n-|J|}} \mathfrak p_t^{(n-|J|)}(\boldsymbol z_{[n]\setminus J},\boldsymbol y)\eta^{[n-|J|],\theta}(\dd\boldsymbol y).
\end{multline}
\end{theorem}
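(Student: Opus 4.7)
My approach is to reduce the identity to Theorem \ref{theorem: duality BDBG} by a purely combinatorial rearrangement of subset sums. Definition \eqref{eq: ort dual BDBG} is linear in the factorial moment measures $\eta_t^{(|I|)}$, so taking expectation and applying Theorem \ref{theorem: duality BDBG} termwise yields an expression for the density of $\E^\lambda_\eta[\eta_t^{(n),\theta}]$ with respect to $m^{\otimes n}$ which I would then re-parametrize to match the claimed form.

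Concretely, after taking expectation in \eqref{eq: ort dual BDBG} and inserting Theorem \ref{theorem: duality BDBG} for each $\E^\lambda_\eta[\eta_t^{(|I|)}]$, the Radon--Nikodym derivative becomes
$$\sum_{I\subset[n]}(-\theta)^{n-|I|}\sum_{K\subset I}\E^{\rm{abs}}_{\boldsymbol z_K}\!\left[\ll^{\xi_t(\{0\})}\lr^{\xi_t(\{1\})}\boldsymbol 1_{\{\xi_t(\partial E)=|K|\}}\right]\int \mathfrak p_t^{(|I|-|K|)}(\boldsymbol z_{I\setminus K},\boldsymbol y)\,\eta^{(|I|-|K|)}(\dd\boldsymbol y).$$
I would then change variables to $J:=K$ and $L:=I\setminus K$, so that $(J,L)$ ranges over pairs of disjoint subsets of $[n]$ with exponent $(-\theta)^{n-|I|}=(-\theta)^{n-|J|-|L|}$, collapsing the double sum to
$$\sum_{J\subset[n]}\E^{\rm{abs}}_{\boldsymbol z_J}\!\left[\ll^{\xi_t(\{0\})}\lr^{\xi_t(\{1\})}\boldsymbol 1_{\{\xi_t(\partial E)=|J|\}}\right]\sum_{L\subset[n]\setminus J}(-\theta)^{n-|J|-|L|}\int\mathfrak p_t^{(|L|)}(\boldsymbol z_L,\boldsymbol y)\,\eta^{(|L|)}(\dd\boldsymbol y).$$

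On the other side, I would expand the inner integral in the claimed right-hand side using the definition \eqref{eq: extension eta or} of $\eta^{[n-|J|],\theta}$. The key small fact is that $\int_{[0,1]}\mathfrak p_t(z,y)\,\bar m(\dd y)=1$ for every $z\in(0,1)$, which is immediate from $\bar m=m+\delta_0+\delta_1$ together with the probability-conservation identity recalled in Section \ref{bertsec}. Consequently, each coordinate of the $\bar m^{\otimes(n-|J|-|L|)}$-factor integrates to $1$ against its corresponding $\mathfrak p_t$, and $\int\mathfrak p_t^{(n-|J|)}(\boldsymbol z_{[n]\setminus J},\boldsymbol y)\,\eta^{[n-|J|],\theta}(\dd\boldsymbol y)$ reduces exactly to $\sum_{L\subset[n]\setminus J}(-\theta)^{n-|J|-|L|}\int \mathfrak p_t^{(|L|)}(\boldsymbol z_L,\boldsymbol y)\,\eta^{(|L|)}(\dd\boldsymbol y)$, matching the expression obtained from the left-hand side.

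The only real obstacle is careful bookkeeping when reindexing subset sums $I\subset[n]$, $K\subset I$ as disjoint pairs $(J,L)$, and recognizing that the $\bar m$-extension in \eqref{eq: extension eta or} is engineered precisely so that the boundary point masses in $\bar m$ absorb the extra contributions produced by integrating $\mathfrak p_t$ over the absorbing endpoints $\{0,1\}$. Once these combinatorial rearrangements are in place, the identity follows from Theorem \ref{theorem: duality BDBG} with no further probabilistic input.
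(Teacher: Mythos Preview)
Your proposal is correct and follows essentially the same route as the paper: take expectation in \eqref{eq: ort dual BDBG}, apply Theorem \ref{theorem: duality BDBG} termwise, then reindex the double sum $K\subset I\subset[n]$ as disjoint pairs $(J,L)$ and identify the resulting inner sum with $\int \mathfrak p_t^{(n-|J|)}(\boldsymbol z_{[n]\setminus J},\cdot)\,\dd\eta^{[n-|J|],\theta}$. Your explicit justification of this last identification via $\int_{[0,1]}\mathfrak p_t(z,y)\,\bar m(\dd y)=1$ is in fact more detailed than the paper, which simply asserts it follows from \eqref{eq: extension eta or}.
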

\begin{proof}
	Using \eqref{eq: ort dual BDBG} and  \eqref{eq: duality bound driv brownian gas}  we have 
	\begin{align*}
	&\E^{\lambda}_\eta[\eta^{(n),\theta}_t](\dd\boldsymbol z)=\E^{\lambda}_\eta\left[ \sum_{I\subset [n]} (-\theta)^{n-|I|}\, \eta_t^{(|I|)}(\dd\boldsymbol z_I)\; m^{\otimes^{n-|I|}}(\dd\boldsymbol z_{[n]\setminus I})  \right]\\
	&=\sum_{I\subset [n]} (-\theta)^{n-|I|}\E_{\eta}\left[ \eta_t^{(|I|)}(d\boldsymbol z_I) \right]m^{\otimes^{n-|I|}}(\dd\boldsymbol z_{[n]\setminus I}) \\
	&=\sum_{I\subset [n]} (-\theta)^{n-|I|}\left(\sum_{J\subset I} \E^{\rm{abs}}_{\boldsymbol z_J}\left[  \ll^{\xi_t(\{0\})}\lr^{\xi_t(\{1\})}\boldsymbol 1_{\{\xi_t(\partial E)=|J|\}} \right]\right.\\
	&\left.\qquad \qquad \qquad \qquad \qquad \qquad \qquad \times\int_{E^{|I|-|J|}}\mathfrak p_t^{(|I|-|J|)}( \boldsymbol z_{I\setminus J},\boldsymbol y)\; \eta^{(|I|-|J|)} (\dd \boldsymbol y)\right)m^{\otimes^{n-|J|}}(\dd\boldsymbol z_{[n]\setminus J})\\
	\end{align*}
	and by exchanging the order of the summation in the last expression above we obtain
	\begin{align*}
	&\E^{\lambda}_\eta[\eta^{(n),\theta}_t](\dd\boldsymbol z)
	\\&=\sum_{J\subset [n]}\E^{\rm{abs}}_{\boldsymbol z_J}\left[  \ll^{\xi_t(\{0\})}\lr^{\xi_t(\{1\})}\boldsymbol 1_{\{\xi_t(\partial E)=|J|\}} \right] \\
	&\qquad \qquad \qquad \qquad \qquad \times\left(   \sum_{I\subset [n]\setminus J}(-\theta)^{n-|I|-|J|}  \int_{E^{|I|}}\mathfrak p_t^{(|I|)}( \boldsymbol z_{I},\boldsymbol y) \; \eta^{(|I|)} (\dd \boldsymbol y) \right)m^{\otimes^{n-|J|}}(\dd\boldsymbol z_{[n]\setminus J})
	\end{align*}
	We conclude by noticing that 
	\begin{multline}
	\sum_{I\subset [n]\setminus J}(-\theta)^{n-|I|-|J|} \int_{E^{|I|}}\mathfrak p_t^{(|I|)}( \boldsymbol z_{I},\boldsymbol y)\; \eta^{(|I|)} (\dd \boldsymbol y)=\int_{E^{n-|J|}} \mathfrak p^{(n-|J|)}(\boldsymbol z_{[n]\setminus J},\boldsymbol y)\; \eta^{[n-|J|],\theta}(\dd\boldsymbol y).
	\end{multline}
	which can be proved using \eqref{eq: extension eta or}.
\end{proof}
\vskip.3cm
\noindent	
Notice that the same result holds for any boundary driven system of strongly reversible Markov processes as the ones treated in Section \ref{section: boundary driven independent particles} and for the discrete system defined in \eqref{bikost}.

We thus conclude the section by showing that indeed Theorem \ref{theorem: ort dual BDBG} generalizes the duality relation for the discrete system  w.r.t. the orthogonal dualities given in \eqref{eq: ort dual fede}. Notice, indeed, that we have, from \eqref{eq: ort dual BDBG} and \eqref{eq: ort dual fede}, $$\E^{\lambda}_\eta\left[ \eta^{(n),\theta}_t(\{z_1,\ldots, z_n\})  \right]=\E^{\lambda}_{\eta}\left[   D^{\rm{or}}_{\theta}\left(\xi, \eta_t\right)  \right].$$ It thus remains to prove the following.

\begin{proposition}
Let $\eta_t$ denote the process defined in \eqref{bikost}. Then
for all $n\in \N$ and $z_1, \ldots, z_n\in  V_N$, denoting $ \sum_{i=1}^n \delta_{z_i}=\xi$, we have
\begin{align}
\sum_{J\subset [n]}\E^{\rm{abs}}_{\boldsymbol z_J}\left[  \ll^{\xi_t(\{0\})}\lr^{\xi_t(\{1\})}\boldsymbol 1_{\{\xi_t(\partial E)=|J|\}} \right]\int \mathfrak  p_t^{(n-|J|)}(\boldsymbol z_{[n]\setminus J},\boldsymbol y)\; \eta^{(n-|J|),\theta}(d\boldsymbol y)=\E^{\rm{abs}}_{\xi}\left[   D^{\rm{or}}_{\rm{res},\theta}\left(\xi_t, \eta\right)  \right].
\end{align}	
\end{proposition}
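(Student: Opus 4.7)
The plan is to expand both sides as sums of classical-duality expectations indexed by subsets of the dual particle labels, and then check termwise equality using independence of the absorbed walkers. First I would establish the algebraic decomposition
\begin{equation*}
D^{\rm{or}}_{\rm{res},\theta}(\xi,\eta) \;=\; \sum_{\xi'\le\xi}\binom{\xi}{\xi'}(-\theta)^{|\xi|-|\xi'|}\,\ll^{\xi'(0)}\lr^{\xi'(N+1)}\,D^{\rm{cl}}(\xi',\eta),
\end{equation*}
which follows by expanding $(\ll-\theta)^{\xi(0)}$ and $(\lr-\theta)^{\xi(N+1)}$ via the binomial theorem and then applying the closed-system identity \eqref{eq: ort dual I} to the bulk factor $\prod_{x\in V_N}C_{\xi(x)}(\eta(x),\theta)$. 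Setting $\xi_t=\sum_{i=1}^n\delta_{X_t(i)}$, so that each submultiset $\xi'\le\xi_t$ is indexed by a subset $I\subset[n]$ of particle labels, and using independence of the walkers to write $\E^{\rm{abs}}_\xi[F(\xi_t^I)]=\E^{\rm{abs}}_{\boldsymbol z_I}[F(\xi_t^I)]$ for $\xi_t^I=\sum_{i\in I}\delta_{X_t(i)}$, this identity yields
\begin{equation*}
\E^{\rm{abs}}_\xi[D^{\rm{or}}_{\rm{res},\theta}(\xi_t,\eta)] \;=\; \sum_{I\subset[n]}(-\theta)^{n-|I|}\,\E^{\rm{abs}}_{\boldsymbol z_I}\!\left[\ll^{\xi_t^I(0)}\lr^{\xi_t^I(N+1)}D^{\rm{cl}}(\xi_t^I,\eta)\right].
\end{equation*}

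Next I would refine each $I$ according to the fate of its particles at time $t$, writing $I=J_L\sqcup J_R\sqcup K$ with $J_L,J_R$ the labels absorbed at $0,N+1$ respectively and $K$ the labels remaining in the bulk. By independence the inner expectation factorizes over these three groups; summing over partitions $J_L\sqcup J_R=J$ at fixed $J$ and $K$ collapses the boundary contributions into $\prod_{i\in J}(\ll\,\P_{z_i}(X_t=0)+\lr\,\P_{z_i}(X_t=N+1))$, which is precisely $\E^{\rm{abs}}_{\boldsymbol z_J}[\ll^{\xi_t^J(0)}\lr^{\xi_t^J(N+1)}\mathbf 1_{\{\xi_t^J(\{0,N+1\})=|J|\}}]$. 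It remains to identify the surviving sum over $K\subset[n]\setminus J$ with the integral of $\mathfrak p_t^{(n-|J|)}(\boldsymbol z_{[n]\setminus J},\cdot)$ against $\eta^{[n-|J|],\theta}$ on the left. This identification follows by expanding $\eta^{[m],\theta}$ from its defining formula and applying the two elementary identities $\int\mathfrak p_t(z,y)\bar m(dy)=1$ (for particles in $[n]\setminus J\setminus K$) and $\int\prod_{i\in K}\mathfrak p_t(z_i,y_i)\,\eta^{(|K|)}(dy_K)=\E^{\rm{abs}}_{\boldsymbol z_K}[\mathbf 1_{\{\xi_t^K(\{0,N+1\})=0\}}D^{\rm{cl}}(\xi_t^K,\eta)]$, the latter encoding the standard relation between factorial moment measures and the classical duality function.

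The principal difficulty will be the combinatorial bookkeeping required to align the labelled-particle picture (in which independence is most naturally applied) with the unlabelled configuration picture in which $D^{\rm{or}}_{\rm{res},\theta}$ is formulated: one must check that the double sum over $(J,I)$ arising on the right indexes the same partitions of $[n]$ into absorbed, bulk, and \emph{phantom} ($-\theta$-weighted) particles as the double sum over $(J,K)$ on the left. A subtle and essential technical point, already implicit in the proof of Theorem~\ref{theorem: ort dual BDBG}, is that the extended measure $\bar m=m+\delta_0+\delta_{N+1}$ entering $\eta^{[\cdot],\theta}$ (as opposed to $m$ alone in $\eta^{(\cdot),\theta}$) is exactly what produces $\int\mathfrak p_t(z,y)\bar m(dy)=1$ and thereby allows the phantom factors to telescope; without this extension, residual boundary-hitting probabilities would obstruct the identification carried out in the previous step.
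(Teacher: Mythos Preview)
Your approach is correct and essentially mirrors the paper's: both arguments pivot on the intermediate expression $\sum_{U\subset[n]}(-\theta)^{n-|U|}\E^{\rm{abs}}_{\boldsymbol z_U}\big[\ll^{\xi_t(\{0\})}\lr^{\xi_t(\{N+1\})}D^{\rm cl}(\xi_t,\eta)\big]$, with the paper proceeding LHS$\to$middle$\to$RHS while you go RHS$\to$middle$\to$LHS using the same ingredients (independence of walkers, binomial expansion at the boundary, the relation \eqref{eq: ort dual I}). You have also correctly identified that the extended measure $\eta^{[\cdot],\theta}$ (built from $\bar m$) rather than $\eta^{(\cdot),\theta}$ is what makes the identification work, since only then does $\int\mathfrak p_t(z,y)\,\bar m(dy)=1$ hold; the paper's own proof tacitly relies on this in its first displayed equality.
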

\begin{proof}
By the definition of $ \eta^{(n-|J|),\theta}$ we obtain
\begin{align}\label{eq: help get ort dual res}
&\nonumber\sum_{J\subset [n]}\E^{\rm{abs}}_{\boldsymbol z_J}\left[  \ll^{\xi_t(\{0\})}\lr^{\xi_t(\{1\})}\boldsymbol 1_{\{\xi_t(\partial E)=|J|\}} \right]\int \mathfrak p_t^{(n-|J|)}(\boldsymbol z_{[n]\setminus J},\boldsymbol y)\; \eta^{(n-|J|),\theta}(d\boldsymbol y)\\
&=\sum_{J\subset [n]}\E^{\rm{abs}}_{\boldsymbol z_J}\left[  \ll^{\xi_t(\{0\})}\lr^{\xi_t(\{1\})}\boldsymbol 1_{\{\xi_t(\partial E)=|J|\}} \right]\left(  \sum_{I\subset [n]\setminus J}  (-\theta)^{n-|J|-|I|}\int_{E^{|I|}} \mathfrak p_t^{(|I|)}(\boldsymbol z_{I},\boldsymbol y)\; \eta^{(|I|)}(d\boldsymbol y)  \right) \nonumber\\
&=\sum_{J\subset [n]}\E^{\rm{abs}}_{\boldsymbol z_J}\left[  \ll^{\xi_t(\{0\})}\lr^{\xi_t(\{1\})}\boldsymbol 1_{\{\xi_t(\partial E)=|J|\}} \right]\left(  \sum_{I\subset [n]\setminus J}  (-\theta)^{n-|J|-|I|}\; \E^{\rm{abs}}_{\boldsymbol z_{I}}\left[ \prod_{x=1}^N d(\xi_t(\{x\}), \eta(\{x\}))\right] \right)\nonumber\\
&=\sum_{J\subset [n]} \sum_{I\subset [n]\setminus J}  (-\theta)^{n-|J|-|I|} \; \E^{\rm{abs}}_{\boldsymbol z_J}\left[  \ll^{\xi_t(\{0\})}\lr^{\xi_t(\{1\})}\boldsymbol 1_{\{\xi_t(\partial E)=|J|\}} \right]   \E^{\rm{abs}}_{\boldsymbol z_{I}}\left[ \prod_{x=1}^N d(\xi_t(\{x\}), \eta(\{x\}))\right]\nonumber \\
&=\sum_{U\subset [n]} (-\theta)^{n-|U|} \; \E^{\rm{abs}}_{\boldsymbol z_U}\left[  \ll^{\xi_t(\{0\})}\lr^{\xi_t(\{1\})}\prod_{x=1}^N d(\xi_t(\{x\}), \eta(\{x\}))\right],
\end{align}	
where in the last line we used the independence of the particles. Combining \eqref{eq: help get ort dual res}, \eqref{eq: relation fact meas clas dual} and \eqref{reservoirdualfunction} we get 
\begin{align*}
&\sum_{J\subset [n]}\E^{\rm{abs}}_{\boldsymbol z_J}\left[  \ll^{\xi_t(\{0\})}\lr^{\xi_t(\{1\})}\boldsymbol 1_{\{\xi_t(\partial E)=|J|\}} \right]\int \mathfrak p_t^{(n-|J|)}(\boldsymbol z_{[n]\setminus J},\boldsymbol y)\; \eta^{(n-|J|),\theta}(d\boldsymbol y)\\
&=\sum_{U\subset [n]} (-\theta)^{n-|U|}\; \E^{\rm{abs}}_{\boldsymbol z_U}\left[  \ll^{\xi_t(\{0\})}\lr^{\xi_t(\{1\})} D^{\rm{cl}}(\xi_t,\eta)\right]=\sum_{U\subset [n]} (-\theta)^{n-|U|} \; \E^{\rm{abs}}_{\boldsymbol z_U}\left[   D^{\ll,\lr}(\xi_t,\eta)\right].
\end{align*}	
We then have, using again the independence of particles,
\begin{align*}
&\sum_{U\subset [n]} (-\theta)^{n-|U|}\; \E^{\rm{abs}}_{\boldsymbol z_U}\left[  \ll^{\xi_t(\{0\})}\lr^{\xi_t(\{1\})} D^{\rm{cl}}(\xi_t,\eta)\right]\\&=\E^{\rm{abs}}_{\boldsymbol z}\left[  \sum_{\xi'\le \xi_t} \binom{\xi_t}{\xi'}(-\theta)^{n-\xi'(\tilde V_N)}\; \ll^{\xi'(\{0\})} \, \lr^{\xi'(\{N+1\})} \, D^{\rm{cl}}\left(\xi'{\big |}_{V_N},\eta\right)\right]\\
&=\E^{\rm{abs}}_{\boldsymbol z}\left[      \left (   \sum_{\ell=0}^{\xi_t(\{1\})}\binom{\xi_t(\{1\})}{\ell}(-\theta)^{\xi_t(\{1\})-\ell} \, \ll^{\ell}  \right) \left( \sum_{\xi'\le \xi_t{ |}_{V_N}}\binom{\xi_t{\big |}_{V_N}}{\xi'}(-\theta)^{\xi_t(V_N)-\xi'(V_N)} \, \Dcl\left(\xi',\eta\right)  \right)   \right.\\
&\left. \qquad \qquad \qquad \qquad \qquad \qquad \qquad \qquad \qquad \  \qquad \qquad \times\left (   \sum_{r=0}^{\xi_t(\{N\})}\binom{\xi_t(\{N\})}{r}(-\theta)^{\xi_t(\{N\})-r} \, \lr^{r}  \right) \right]\\
&= \E^{\rm{abs}}_{\xi}\left[      (\ll-\theta)^{\xi_t(\{0\})}(\lr-\theta)^{\xi_t(\{N+1\})} D^{\rm{or}}_{\theta}\left(\xi_t{\big |}_{V_N}, \eta\right) \right]
\end{align*}
where the third identity follows from \eqref{eq: ort dual I}.  The  proof is concluded by noticing that 
$$(\ll-\theta)^{\xi_t(\{0\})}(\lr-\theta)^{\xi_t(\{N+1\})} D^{\rm{or}}_{\theta}\left(\xi_t{\big |}_{V_N}, \eta\right) =D^{\rm{or}}_{\rm{res},\theta}\left(\xi_t, \eta\right).$$

\end{proof}

\begin{appendices}

\section{Markov property of the boundary driven independent particles}\label{Markovianity}

\begin{theorem}\label{theorem: markov property MG}
	Assume that 
	\be\label{eq: reversibility p gas}
	p_t(x,\dd y) m(\dd x)=p_t(y,\dd x) m(\dd y) \qquad \text{on} \qquad \mathfrak D \times \mathfrak D \ 
	\ee  
	for some finite measure $m(\dd x)$ on $\mathfrak D$.
	Denote by $\caP_{\lambda_t}$ the law of a Poisson point process with intensity measure given in \eqref{eq: lambda t multidim MP} with the measure $m$ in place of $\mu$ and  let  $\Pres_t:\Omega\times \mathcal B(\Omega)\to [0,1]$, $t\ge 0$     defined by 
	\begin{equation}
	\Pres_t(\eta,B):=\int_{\Theta +\xi\in B}\caP_{\lambda_t}(\dd \Theta)P_t(\eta,\dd \xi),
	\end{equation} 
	where $P_t$ denotes the semigroup of the process $\{\xi_t, \, t\ge 0\}$.
	\noindent
	Then, the family $\Pres_t$, $t\ge 0$ is a time homogeneous transition function on $(\Omega, \mathcal B(\Omega))$ and there exists a Markov family with transition function $\Pres_t$.
\end{theorem}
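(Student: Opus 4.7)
The plan is to verify the three defining properties of a time-homogeneous Markov transition function for $(\Pres_t)_{t\ge 0}$ and then invoke Kolmogorov's consistency theorem. For each $\eta$, the set function $B\mapsto \Pres_t(\eta,B)$ will be a probability measure on the Polish space $\Omega$ of finite point configurations on $\mathfrak D$, since $P_t(\eta,\cdot)$ is a probability and $\caP_{\lambda_t}$ has finite total intensity (so the superposition is a.s.\ a finite point configuration); measurability in $\eta$ is immediate from the definition and Fubini. The essential content is the semigroup identity $\Pres_{t+s}=\Pres_s\circ\Pres_t$, and once it is established the existence of a Markov family with transition function $\Pres_t$ follows from a standard application of Kolmogorov's theorem on the path space.

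To verify Chapman--Kolmogorov, I would apply $\Pres_s$ to the output of $\Pres_t(\eta,\cdot)$, which by construction decomposes into three mutually independent pieces: the further evolution of $\xi_t|_{\mathfrak D}$, the evolution of $\Theta_t\sim\caP_{\lambda_t}$, and an independent fresh Poisson $\Theta'_s\sim\caP_{\lambda_s}$. The first is distributed as $\xi_{t+s}|_{\mathfrak D}$ by the Markov property of a single absorbed particle together with the independence of the particles. The second, by Doob's theorem (Theorem \ref{doob}), is a Poisson point process with intensity $(S^*_s\lambda_t)|_{\mathfrak D}$, and its independent superposition with $\Theta'_s$ is Poisson with intensity $(S^*_s\lambda_t)|_{\mathfrak D}+\lambda_s$. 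Chapman--Kolmogorov therefore reduces to the functional equation
\begin{equation*}
(S^*_s\lambda_t)|_{\mathfrak D}+\lambda_s=\lambda_{s+t}\qquad\text{for all }s,t\ge 0.
\end{equation*}

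This identity is the main obstacle and is precisely the place where the reversibility hypothesis \eqref{eq: reversibility p gas} and the choice $\mu=m$ are essential. The approach is to write $\lambda_u(\dd x)=f(u,x)\,m(\dd x)$ with $f(u,x)=\E^{\rm{abs}}_x[\lambda(X_{\tau_{\mathfrak D^{\rm{ext}}}})\,\boldsymbol 1_{\{\tau_{\mathfrak D^{\rm{ext}}}\le u\}}]$, and to test $(S^*_s\lambda_t)|_{\mathfrak D}$ against a bounded measurable $g:\mathfrak D\to\R$, obtaining $\int_{\mathfrak D}\int_{\mathfrak D} g(y)\,f(t,x)\,p_s(x,\dd y)\,m(\dd x)$. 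The reversibility \eqref{eq: reversibility p gas} exchanges the roles of $x$ and $y$ and rewrites this as $\int_{\mathfrak D} g(y)\,(S_sf(t,\cdot))(y)\,m(\dd y)$, where $S_s$ denotes the absorbed semigroup. The strong Markov property applied at $\tau_{\mathfrak D^{\rm{ext}}}$ then identifies $S_sf(t,y)$ with $\E^{\rm{abs}}_y[\lambda(X_{\tau_{\mathfrak D^{\rm{ext}}}})\,\boldsymbol 1_{\{s<\tau_{\mathfrak D^{\rm{ext}}}\le s+t\}}]$; adding $\lambda_s$ merges the two indicators into $\boldsymbol 1_{\{\tau_{\mathfrak D^{\rm{ext}}}\le s+t\}}$ and produces exactly $\lambda_{s+t}$, closing the argument. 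The remaining tasks -- measurability, total mass one, and the final Kolmogorov construction -- are purely routine.
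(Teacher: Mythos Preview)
Your proposal is correct and follows essentially the same route as the paper's proof: both reduce Chapman--Kolmogorov to the intensity identity $(S^*_s\lambda_t)|_{\mathfrak D}+\lambda_s=\lambda_{s+t}$ (the paper's equation (A.4), with $s$ and $t$ in the opposite roles) and then verify it via the reversibility hypothesis \eqref{eq: reversibility p gas} together with the strong Markov property at $\tau_{\mathfrak D^{\rm ext}}$. The only cosmetic difference is that the paper carries out the reduction through characteristic functionals (citing \cite[Lemma~A.3]{bertini}), whereas you argue directly via Doob's theorem and the superposition property of Poisson processes; the underlying computation is identical.
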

\begin{proof}
We need to show that $\Pres_t$ satisfies the Chapman-Kolmogorv equation, which, due to \cite[Lemma A.3]{bertini}, boils down to check that for any continuous function $\psi$ with compact support strictly contained in $\mathfrak D$ and for any $s,t>0$
		$$\int \Pres_{s+t}(\eta,\dd \bar \eta )e^{i \int \psi \dd \bar \eta}=\int\int \Pres_s(\eta, \dd \zeta)\Pres_t(\zeta, \dd\bar \eta) e^{i \int \psi \dd \bar \eta}.$$
		By the definition of $\Pres_{t+s}$ and using \eqref{poislap}, we have that the left hand side is equal to $$\exp\left\{ \int (e^{i\psi}-1) \dd \lambda_{t+s}\right\}\int P_{s+t}(\eta, \dd \xi)e^{i\int \psi \dd \xi}.$$
		On the other hand, for the right hand side we have, 
		\begin{align*}
		&\int\int \Pres_s(\eta, \dd \zeta)\Pres_t(\zeta, \dd\bar \eta) e^{i \int \psi \dd \bar \eta}\\
		&=\int \caP_{\lambda_s}(\dd \Theta_1)\int P_s(\eta, \dd \xi_1) \int \caP_{\lambda_t}(\dd \Theta_2)\int P_t(\Theta_1+\xi_1,\dd\xi_2) e^{i\int \psi \dd(\xi_2+\Theta_2)}\\
		&=\exp\left\{ \int (e^{i\psi}-1) \dd \lambda_{t}\right\}\int \caP_{\lambda_s}(\dd \Theta_1)\int P_s(\eta, \dd \xi_1)\int P_t(\Theta_1+\xi_1,\dd\xi_2) e^{i\int \psi \dd\xi_2}\\
		&=\exp\left\{ \int (e^{i\psi}-1) \dd \lambda_{t}\right\}\\&\qquad\qquad\qquad\qquad\times\int \caP_{\lambda_s}(\dd \Theta_1)\int P_s(\eta, \dd \xi_1)\int P_t(\Theta_1,\dd\xi_{2,1}) e^{i\int \psi \dd\xi_{2,1}}\int P_t(\xi_1,\dd\xi_{2,2}) e^{i\int \psi \dd\xi_{2,2}}\\
		&=\exp\left\{ \int (e^{i\psi}-1) \dd \lambda_{t}\right\}\int \caP_{\lambda_s}(\dd \Theta_1)\int P_t(\Theta_1,\dd\xi_{2,1}) e^{i\int \psi \dd\xi_{2,1}}\int P_{t+s}(\eta,\dd\xi) e^{i\int \psi \dd\xi}\\
		\end{align*}
		where we used the definition of $\Pres_t$ first, the independence of the particles after and finally the Champan-Kolmogorov equation for $P_t$.
		Thus, it remains to show that 
		\begin{equation}\label{eq: intermediate step markov property multidim}
		\int \caP_{\lambda_s}(\dd \Theta_1)\int P_t(\Theta_1,\dd\xi_{2,1}) e^{i\int \psi \dd\xi_{2,1}}=\exp\left\{ \int (e^{i\psi}-1) \dd \lambda_{t+s}-\int (e^{i\psi}-1) \dd \lambda_{t}\right\}.
		\end{equation}
		By the independence of the particles and \eqref{poislap} follows that 
		$$\int \caP_{\lambda_s}(\dd \Theta_1)\int P_t(\Theta_1,\dd\xi_{2,1}) e^{i\int \psi \dd\xi_{2,1}}= \exp\left\{ \int S_t(e^{i\psi}-1)(x)\lambda_s(\dd x)  \right\},$$
		where $S_t$ denotes the semigroup of the absorbed Markov process upon hitting $\mathfrak D^{\rm{ext}}$ and which is given by 
		$$S_tf(x)=\int_{\mathfrak D}  p_t(x,\dd y)f(y)+ \int_{\mathfrak D^{\rm{ext}}} f(z)\P_x(\tau_{\mathfrak D^{\rm{ext}}}\le t,\, X_{\tau_{\mathfrak D^{\rm{ext}}}}\in \dd z)$$
		for any $f: \mathfrak D^*\to \R$ bounded function. Being $\psi$ zero at $\mathfrak D^{\rm{ext}}$ we have that 
		$$\int S_t(e^{i\psi}-1)(x)\lambda_s(\dd x) = \int \left( \int  p_t(x,\dd y)(e^{i\psi(y)}-1)  \right) \lambda_s(\dd x)$$
		and thus, \eqref{eq: intermediate step markov property multidim} is given if one proves that 
		\begin{equation}\label{eq: intermediate step markov property multidim22}
		\int  p_t(x,\dd y)\lambda_s(\dd x)=\lambda_{t+s}(\dd y)-\lambda_{t}(\dd y).
		\end{equation} 
		Using the  definition of $\lambda_s$ given in \eqref{eq: lambda t multidim MP}, we have that the left hand side of \eqref{eq: intermediate step markov property multidim22} is equal to 
		$$\left[\int  p_t(x,\dd y)\left(   \int_{ \mathfrak D^{\rm{ext}}} \lambda(z)\, \P_x(\tau_{ \mathfrak D^{\rm{ext}}}\le s,\, X_{\tau_{\mathfrak D^{\rm{ext}}}}\in \dd z) \right)  m(\dd x)  \right].$$
	Using the strong Markov property of the absorbed Markov  process, we have 
		\begin{align*}
		&\lambda_{t+s}(\dd y)=\left(   \int_{\mathfrak D^{\rm{ext}}} \lambda(z)\P_y(\tau_{\mathfrak D^{\rm{ext}}}\le t+s,\, X_{\tau_{\mathfrak D^{\rm{ext}}}}\in \dd z) \right)m(\dd y)\\
		&=\left(   \int_{\mathfrak D^{\rm{ext}}} \lambda(z)\P_y(\tau_{ \mathfrak D^{\rm{ext}}}\le t,\, X_{\tau_{ \mathfrak D^{\rm{ext}}}}\in \dd z) \right)m(\dd y)\\&\qquad \qquad \qquad \qquad + \left[\int  p_t(y,\dd x)\left(   \int_{\mathfrak D^{\rm{ext}}} \lambda(z)\P_x(\tau_{\mathfrak D^{\rm{ext}}}\le s,\, X_{\tau_{\mathfrak D^{\rm{ext}}}}\in \dd z) \right)  m(\dd y)  \right] \\
		&=\lambda_{t}(\dd y)+\left[\int  p_t(x,\dd y)\left(   \int_{\mathfrak D^{\rm{ext}}} \lambda(z)\P_x(\tau_{\mathfrak D^{\rm{ext}}}\le s,\, X_{\tau_{\mathfrak D^{\rm{ext}}}}\in \dd z) \right)  m(\dd x)  \right]
		\end{align*}
		where in the last identity we used the  condition \eqref{eq: reversibility p gas}. Thus  \eqref{eq: intermediate step markov property multidim22} follows, concluding the proof.
\end{proof}
	
	\end{appendices}

\end{document}